\theoremstyle{plain}
\newtheorem{theorem}{Theorem}
\numberwithin{theorem}{section}
\newtheorem{corollary}[theorem]{Corollary}
\newtheorem*{corollary*}{Corollary}
\newtheorem*{Example*}{Example}
\newtheorem{proposition}[theorem]{Proposition}
\theoremstyle{definition}
\newtheorem*{def*}{Definition}
\newtheorem*{theorem*}{Theorem}
\newtheorem*{problem}{Problem}
\newtheorem*{definition*}{Definition}
\theoremstyle{remark}
\newtheorem*{remark}{Remark}
\newcommand{\bracket}[1]{\left( #1 \right)}
\newcommand{\floor}[1]{\left\lfloor #1 \right\rfloor}
\newcommand{\modulo}[3]{#1\equiv#2\ \bracket{\mathrm{mod}\ #3}}
\numberwithin{equation}{section}
\title{\textbf{New congruences for 4,6-regular partitions modulo primes}}
\author{QI-YANG ZHENG}
\date{} 
\address{Department of Mathematics, Sun Yat-sen University(Zhuhai Campus), China}
\email{zhengqy29@mail2.sysu.edu.cn}
\begin{document}

\begin{abstract}
The main result of the paper is the existence of an infinitely many families of Ramanujan-type congruences for $b_4(n)$ and $b_6(n)$ modulo primes $m \geq 2$ and $m \geq 5$, respectively. We provide new examples of congruences for $b_4(n)$ and $b_6(n)$. Moreover, we find two infinite explicit infinite families of congruences for $b_4(n)$ modulo $3$.
\end{abstract}

\maketitle

~

\section{Introduction}

The number of partitions of $n$ in which no parts are multiples of $k$ is denoted by $b_k(n)$ and referred to as $k$-regular partitions.

We agree that $b_k(0)=1$ for convenience. Moreover, let $b_k(n)=0$ if $n\not\in\mathbb{Z}_{\geq0}$. The generating function for the $k$-regular partitions is as follows:

\begin{equation}
    \abovedisplayskip=1em
    \notag
    \sum_{n=0}^\infty b_k(n)q^n=\prod_{n=1}^\infty\frac{1-q^{kn}}{1-q^n}.
    \belowdisplayskip=1em
\end{equation}

~

In 1919, Ramanujan discovered three remarkable congruences for the unrestricted partition function $p(n)$, as shown below:
\begin{equation}
    \abovedisplayskip=1em
    \notag
    \begin{aligned}
        p(5n+4)&\equiv0\ (\mathrm{mod}\ 5),\\
        p(7n+5)&\equiv0\ (\mathrm{mod}\ 7),\\
        p(11n+6)&\equiv0\ (\mathrm{mod}\ 11).
    \end{aligned}
    \belowdisplayskip=1em
\end{equation}

We refer to such congruences as Ramanujan-type congruences. Lovejoy \cite{lovejoy2001divisibility} studied the distribution of $b_2(n)$ and proved the existence of Ramanujan-type congruences for $b_2(n)$ modulo every prime $m\geq5$. In a previous work, the author \cite{zheng2022distribution} demonstrated similar results for $b_3(n)$ and $b_5(n)$.

The primary result of this paper is as follows:

\begin{theorem}
\label{infinitely many Ramanujan-type congruences}

~

\begin{enumerate}
    \item For every prime $m\geq2$, there exist infinitely many Ramanujan-type congruences of $b_4(n)$ modulo $m$.
    \item For every prime $m\geq5$, there exist infinitely many Ramanujan-type congruences of $b_6(n)$ modulo $m$.
\end{enumerate}
\end{theorem}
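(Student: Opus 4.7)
The plan is to realize the reduction of $\sum b_4(n)q^n$ (resp.\ $\sum b_6(n)q^n$) modulo a prime $m$ as the $q$-expansion (up to a rational shift) of a weakly holomorphic modular form on a congruence subgroup $\Gamma_0(N)$ with $\gcd(m,N)=1$, and then apply Treneer's theorem on the existence of Ramanujan-type congruences for such forms.

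The starting point is the Frobenius identity $\prod_{n\ge 1}(1-q^n)^m \equiv \prod_{n\ge 1}(1-q^{mn})\pmod m$. Multiplying $\sum_{n\ge 0}b_4(n)q^n = \prod_{n\ge 1}(1-q^{4n})/(1-q^n)$ by $\prod_{n\ge 1}(1-q^n)^m$ and reducing modulo $m$ yields
\[
\prod_{n\ge 1}(1-q^{mn})\sum_{n\ge 0}b_4(n)q^n \;\equiv\; \prod_{n\ge 1}(1-q^n)^{m-1}(1-q^{4n}) \pmod m,
\]
whose right-hand side is, up to a rational power of $q$, the eta quotient $\eta(z)^{m-1}\eta(4z)$. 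For odd primes $m\ge 3$, Ligozat's criterion should show this is a weakly holomorphic modular form of half-integer weight $m/2$ on $\Gamma_0(4)$ with a quadratic nebentypus. A parallel construction $\eta(z)^{m-1}\eta(6z)$ for $b_6$ lives on $\Gamma_0(12)$, and the restriction $m\ge 5$ then arises from the coprimality requirement $\gcd(m,12)=1$ imposed by the next step.

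Next I would invoke the theorem of Treneer: for a weakly holomorphic modular form $f=\sum a(n)q^n$ of half-integer weight on $\Gamma_0(4N)$ with algebraic-integer coefficients and any prime $\ell$ coprime to $4N$, there exist infinitely many arithmetic progressions $An+B$ on which $a(An+B)\equiv 0\pmod\ell$. Applied to the eta quotients above and translated back through the Frobenius congruence, this yields infinitely many Ramanujan-type congruences modulo $m$ for $b_4(n)$ (any odd prime $m$) and for $b_6(n)$ (any prime $m\ge 5$).

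The remaining case is $b_4$ modulo $2$, for which $\Gamma_0(4)$ has level not coprime to $m=2$ and Treneer's theorem fails to apply directly. Here I would use $(1-q^{4n})\equiv(1-q^n)^4\pmod 2$ to obtain $\sum b_4(n)q^n \equiv \prod_{n\ge 1}(1-q^n)^3\pmod 2$, and then combine this with Jacobi's identity $\eta(z)^3=\sum_{k\ge 0}(-1)^k(2k+1)q^{(2k+1)^2/8}$ to conclude that $b_4(n)$ is odd if and only if $8n+1$ is an odd perfect square, i.e.\ $n$ is a triangular number. Infinitely many Ramanujan-type congruences $b_4(An+B)\equiv 0\pmod 2$ then follow from an elementary quadratic-residue argument: for any odd prime $p$ and any residue $B\bmod p$ with $8B+1$ a non-residue modulo $p$, the progression $\{pn+B\}$ contains no triangular numbers. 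The principal technical hurdle I anticipate is the precise verification of cusp orders, weight, and nebentypus for the eta quotients so that Treneer's theorem applies with an integer-coefficient expansion at infinity, and checking that the congruences it produces are non-trivial after removing the common factor $\prod(1-q^{mn})$.
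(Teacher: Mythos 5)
Your route --- reducing the generating functions modulo $m$ to eta-quotients and invoking Treneer's theorem as a black box --- is genuinely different from the paper's argument. The paper constructs, for each prime $m\geq 3$, an explicit form congruent coefficientwise to $\sum_n b_4\left(\frac{mn-1}{8}\right)q^n$ inside $M_{3m-3}(\Gamma_0(256))_m$ (and, for $b_6$, one in $S_{2m-2}(\Gamma_0(3456),\chi_6)_m$, with a separate construction at $m=5$), using an auxiliary eta-quotient supported on powers of $q^m$ together with $U(m)\equiv T(m)\pmod m$, and then applies Serre's positive-density theorem and the Hecke recursion directly; the payoff is that the spaces have level coprime to $m$ and the congruences become computable via Sturm's bound (Section 4). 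The paper's own remark concedes that existence also follows from Treneer's work, so your strategy is viable in principle, and your treatment of $b_4$ modulo $2$ (Jacobi's identity, $b_4(n)$ odd iff $n$ is triangular, then a quadratic non-residue progression) is essentially the paper's Section 4 argument and is correct.

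However, as written there is a genuine gap in the middle step. From $(q^m;q^m)_\infty\sum_n b_4(n)q^n\equiv (q;q)_\infty^{m-1}(q^4;q^4)_\infty\pmod m$ you cannot ``translate back'': Treneer controls the coefficients of the right-hand side, but those are convolutions $\sum_j c(j)\,b_4(N-mj)$ with $c(j)$ the coefficients of $(q^m;q^m)_\infty$, and their vanishing modulo $m$ along an arithmetic progression says nothing about individual values $b_4(An+B)$; removing the factor $(q^m;q^m)_\infty$ does not commute with restricting to a progression. The standard repair is to keep the congruence coefficientwise, i.e.\ use $(q;q)_\infty^{m}/(q^m;q^m)_\infty\equiv 1\pmod m$ to write $\sum_n b_4(n)q^n\equiv (q;q)_\infty^{m-1}(q^4;q^4)_\infty/(q^m;q^m)_\infty\pmod m$ and feed \emph{that} weakly holomorphic eta-quotient (of integral weight $\frac{m-1}{2}$, not $\frac m2$) into Treneer. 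This also invalidates your modularity bookkeeping: $\eta(z)^{m-1}\eta(4z)$ fails the Gordon--Hughes/Ligozat conditions on $\Gamma_0(4)$, since $\sum_\delta \delta r_\delta=m+3\not\equiv 0\pmod{24}$ for any prime $m$, so it is not a form on $\Gamma_0(4)$ (nor is the $b_6$ analogue on $\Gamma_0(12)$) with a Dirichlet nebentypus; one must rescale $z\mapsto 24z$ (or similar), after which the level is divisible by $m$ and by large powers of $2$ and $3$, and you must check that Treneer's hypotheses still apply with $m$ dividing the level --- precisely the difficulty the paper's $U(m)\equiv T(m)$ device avoids by returning to levels $256$ and $3456$, coprime to $m$. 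Finally, Treneer's conclusion has the shape $a(Q^3 n)\equiv 0\pmod m$ for $n$ coprime to $Qm$ for a positive proportion of primes $Q$, so you still need the same elementary conversion into honest progressions $An+B$ that the paper carries out at the end of Section 3.
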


\noindent
Specifically, we find two infinite explicit families of congruences for $b_4(n)$ modulo $3$.

\begin{theorem}
\label{congruences mod 3}
If $l$ is a prime with $\modulo{l}{13,17,19,23}{24}$, then
\begin{equation}
    \abovedisplayskip=1em
    \notag
    b_4\bracket{3l(ln+j)+\frac{l^{2}-1}{8}}\equiv0\ (\mathrm{mod}\ 3),
    \belowdisplayskip=1em
\end{equation}

\noindent
and
\begin{equation}
    \abovedisplayskip=1em
    \notag
    b_4\bracket{3l(ln+j)+\frac{9l^{2}-1}{8}}\equiv0\ (\mathrm{mod}\ 3),
    \belowdisplayskip=1em
\end{equation}

\noindent
for every $n$ and $1\leq j\leq l-1$.
\end{theorem}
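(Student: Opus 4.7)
My plan is to follow the standard recipe for mod-$p$ partition congruences: reduce to a modulo-$3$ eta-quotient identity, then apply Euler's pentagonal number theorem combined with a Legendre-symbol computation to extract the vanishing on the claimed progressions. Because $l$ is coprime to $6$, we have $l^{2} \equiv 1 \pmod{24}$, so $(l^{2}-1)/8 \equiv 0 \pmod{3}$ and $(9l^{2}-1)/8 \equiv 1 \pmod{3}$. The two claimed families therefore amount to vanishing of $b_4(3m)$ at $m = l(ln + j) + (l^{2}-1)/24$ and of $b_4(3m+1)$ at $m = l(ln + j) + 3(l^{2}-1)/8$, respectively. The first step is to apply a $3$-dissection to $f_4/f_1 = \sum b_4(n)\, q^n$, using standard $3$-dissection formulas, to obtain explicit eta-quotient representations of $\sum_{m \geq 0} b_4(3m)\, q^m$ and $\sum_{m \geq 0} b_4(3m+1)\, q^m$.

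Next, using the Frobenius congruence $f_k^{3} \equiv f_{3k} \pmod{3}$, I would simplify these eta quotients modulo $3$ to the form $f_1 \cdot E(q^{3}) \pmod{3}$ for suitable eta quotients $E$. Multiplying by $q$, substituting $q \mapsto q^{24}$, and using the identity $q \cdot f_1(q^{24}) = \eta(24z)$, this becomes
\[
\sum_{n \geq 0} b_4(3n)\, q^{24n + 1} \equiv \eta(24z) \cdot \widetilde{E}(q) \pmod{3},
\]
where $\widetilde{E}$ is a power series in $q^{72}$; a parallel identity treats the second family. By Euler's pentagonal theorem $\eta(24z) = \sum_{k \in \mathbb{Z}} (-1)^k q^{(6k-1)^{2}}$, the coefficient at $q^{N}$ of the right-hand side becomes a signed sum indexed by integers $k$ with $(6k-1)^{2} \leq N$ and $(6k-1)^{2} \equiv N \pmod{72}$, weighted by the Fourier coefficients of $\widetilde{E}$.

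For $m$ in the first target progression, $24m + 1 = l \bigl( l(24n+1) + 24j\bigr)$, whence $v_l(24m + 1) = 1$ (since $\gcd(l, 24) = 1$ and $1 \leq j \leq l-1$); analogously $v_l(8m+3) = 1$ for the second family. The classes $l \equiv 13, 17, 19, 23 \pmod{24}$ are precisely those for which $\bigl(\tfrac{-6}{l}\bigr) = -1$, as one verifies by quadratic reciprocity. Together with the odd $l$-adic valuations above, this Legendre-symbol obstruction prevents any admissible pair $(k, s)$ from contributing a nonzero coefficient modulo $3$ to the $q^{N}$ term, yielding the claimed vanishing.

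The principal difficulty will be the second step: establishing a clean mod-$3$ eta-quotient identity of the shape $f_1 \cdot E(q^{3})$. This requires judicious combinations of $3$-dissection identities (possibly together with $2$-dissections of auxiliary factors), and, once conjectured, can be verified rigorously via the Sturm bound. The subsequent pentagonal expansion then converts the vanishing into an elementary quadratic residue problem, settled by the Legendre symbol computation $\bigl(\tfrac{-6}{l}\bigr) = -1$ for $l \equiv 13, 17, 19, 23 \pmod{24}$.
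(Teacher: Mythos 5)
Your starting point (the Andrews--Hirschhorn--Sellers $3$-dissection of $\sum b_4(n)q^n$) and your identification of the classes $l\equiv 13,17,19,23\pmod{24}$ with $\left(\frac{-6}{l}\right)=-1$ both agree with the paper, but the middle of your argument has a genuine gap, in two respects. First, the claimed mod-$3$ shape $f_1\cdot E(q^3)$ is simply not available for the first family: from the dissection one has $\sum_{n\ge0}b_4(3n)q^n\equiv_3\frac{(q^2;q^2)_\infty^{12}}{(q;q)_\infty^{3}(q^4;q^4)_\infty^{5}}$, and if this were $f_1\cdot E(q^3)$ with $E$ a power series in $q^3$, then matching constant terms forces $E$ to have constant term $1$, whence the coefficient of $q$ on the right is $-1\equiv 2\pmod 3$, while on the left it is $b_4(3)=3\equiv 0\pmod 3$. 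Second, and more fundamentally, even if some identity of the shape ``pentagonal theta times a series in $q^{72}$'' were true, your final step could not close: the only constraints on an admissible pair are $(6k-1)^2\le N$ and $(6k-1)^2\equiv N\pmod{72}$, and multiples of $72$ meet every residue class modulo $l$, so the condition $\left(\frac{-6}{l}\right)=-1$ together with $v_l$ odd imposes no restriction whatsoever on the pairs $(k,s)$; with the coefficients of $\widetilde E$ unknown there is no mechanism forcing the coefficient of $q^N$ to vanish. The quadratic-residue obstruction only works if \emph{both} factors are unary theta series, so that every exponent is a value of an explicit binary quadratic form.

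That is exactly what the paper arranges. It factors the dissected series mod $3$ as a product of two theta functions: for the first family, $\frac{(q^2;q^2)_\infty^{13}}{(q;q)_\infty^{5}(q^4;q^4)_\infty^{5}}=\sum_{m\ge1}\left(\frac{-6}{m}\right)m\,q^{(m^2-1)/24}$ (an eta-quotient theta identity of Lemke Oliver) times $\frac{(q;q)_\infty^2}{(q^2;q^2)_\infty}=\sum_{k}(-1)^k q^{k^2}$, and for the second family, $\sum(-1)^n b_4(3n+1)q^n\equiv_3\phi(q)\psi(q^3)$. The congruences then reduce to showing that $24ln+l^2=24k^2+m^2$, respectively $8ln+3l^2=3(2k+1)^2+8m^2$, has no solutions when $(n,l)=1$: if $l$ divides the relevant variable one gets $l^2\mid 24ln$ (resp. $l^2\mid 8ln$), a contradiction, and otherwise $-6$ would be a square modulo $l$, contradicting $\left(\frac{-6}{l}\right)=-1$. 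To repair your proposal you would need to replace the generic factor $E(q^3)$ by such explicit theta-series factorizations (or an equivalent pair of unary theta identities); the pentagonal number theorem alone, with an unspecified cofactor, cannot deliver the vanishing.
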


\begin{remark}
    If $l$ is a prime with $\modulo{l}{13,17,19,23}{24}$, then
    \begin{equation}
        \abovedisplayskip=1em
        \notag
        b_4\bracket{3l(ln+j)+\frac{17l^{2}-1}{8}}\equiv0\ (\mathrm{mod}\ 3)
        \belowdisplayskip=1em
    \end{equation}

    \noindent
    may not always hold. For example, let $l=13$, $n=0$, and $j=1$, we have $\modulo{b_4(398)}13$.
\end{remark}

We will now present the results that have been established for these regular partitions up to the present moment. Notably, a surprising finding is that \cite{andrews2010arithmetic}
\begin{equation}
    \abovedisplayskip=1em
    \notag
    \sum_{n=0}^\infty b_4(9n+7)q^n=12\frac{(q^2;q^2)_\infty^4(q^3;q^3)_\infty^6(q^4;q^4)_\infty}{(q;q)_\infty^{11}},
    \belowdisplayskip=1em
\end{equation}

\noindent
where $(a;q)_\infty=\prod_{n=0}^\infty(1-aq^n)$. A surprising finding is that it provides Ramanujan-type congruences of $b_4(n)$ modulo $2$ and $3$.

In the case of $b_6(n)$, the only known instances pertain to congruences modulo $3$, as shown in \cite[Theorem 1.6]{ahmed2016new}. For instance, we have
\begin{equation}
    \abovedisplayskip=1em
    \notag
    \modulo{b_6(169n+48)}{0}{3}.
    \belowdisplayskip=1em
\end{equation}

\begin{remark}
    N. D. Baruah, one of the authors of \cite{ahmed2016new}, has notified me that the exponent of $p$ in the second term of the sum (1.7) in \cite{ahmed2016new} should be adjusted to $2\alpha+2$.
\end{remark}

Our findings yield new congruences for $b_4(n)$ and $b_6(n)$; for instance,
\begin{equation}
    \abovedisplayskip=1em
    \notag
    \begin{gathered}
        \modulo{b_4(507 n + 34)}{0}{3},\\
        \modulo{b_4(3272405 n + 2528)}{0}{5},\\
        \modulo{b_4(24978247 n + 11570)}{0}{7},\\
        \modulo{b_6(973182225 n + 2055)}{0}{5}.
    \end{gathered}
    \belowdisplayskip=1em
\end{equation}

\noindent
More examples will be provided in Section \ref{more example}.

\begin{remark}
    The existence of such congruences are guaranteed by the work of Treneer \cite{treneer2006congruences}. However, the method he provided is difficult to obtain such congruences. We present an explicit method for obtaining congruences.
\end{remark}

\section{Preliminaries on modular forms}

\noindent
First, we will introduce the $U$ operator. If $j$ is a positive integer,
\begin{equation}
    \abovedisplayskip=1em
    \notag
    \left( \sum_{n=0}^\infty a(n)q^n \right)\ |\ U(j):=\sum_{n=0}^\infty a(jn)q^n.
    \belowdisplayskip=1em
\end{equation}

\noindent
If $\sum_{n=0}^\infty a(n)q^n\in M_{k}(\Gamma_0(N),\chi)$ represents a modular form and $m$ is a prime,
\begin{equation}
    \abovedisplayskip=1em
    \notag
    \left( \sum_{n=0}^\infty a(n)q^n \right)\ |\ T(m):=\sum_{n=0}^\infty (a(mn)+\chi(m)m^{k-1}a(n/m))q^n,
    \belowdisplayskip=1em
\end{equation}

\noindent
where $a(n/m)=0$ if $m\nmid n$. The $T(m)$ operator corresponds to the standard Hecke operator. It's worth recalling that Dedekind's eta function is defined by
\begin{equation}
    \abovedisplayskip=1em
    \notag
    \eta(z)=q^\frac{1}{24}\prod_{n=1}^\infty (1-q^n),
    \belowdisplayskip=1em
\end{equation}

\noindent
where $q=e^{2\pi iz}$.

~

If $m$ is a prime, we denote by $M_{k}(\Gamma_0(N),\chi)_m$ (respectively, $S_{k}(\Gamma_0(N),\chi)_m$) the $\mathbb{F}_m$-vector space obtained by reducing the $q$-expansions of modular forms (resp. cusp forms) in $M_{k}(\Gamma_0(N),\chi)$ (resp. $S_{k}(\Gamma_0(N),\chi)$) with integer coefficients modulo $m$.

At times, for convenience, we will use the notation $a\equiv_m b$ instead of $a\equiv b\ (\mathrm{mod}\ m)$.

~

The construction of modular forms requires the utilization of the following theorem \cite[Theorem 3]{gordon1993multiplicative}:

\begin{theorem}[B. Gordon, K. Hughes]
\label{eta-quotient}
Let
$$f(z)=\prod_{\delta|N}\eta^{r_\delta}(\delta z)$$

\noindent
be a $\eta$-quotient provided

~

\noindent
$\mathrm{(\romannumeral1)}$ $$\sum_{\delta|N}\delta r_\delta\equiv0\ (\mathrm{mod}\ 24);$$
$\mathrm{(\romannumeral2)}$ $$\sum_{\delta|N}\frac{Nr_\delta}{\delta}\equiv0\ (\mathrm{mod}\ 24);$$
$\mathrm{(\romannumeral3)}$ $$k:=\frac{1}{2}\sum_{\delta|N}r_\delta\in\mathbb{Z},$$

\noindent
then

$$f\left(\frac{az+b}{cz+d}\right)=\chi(d)(cz+d)^kf(z),$$

\noindent
for each $\begin{pmatrix}
 a & b\\
 c & d
\end{pmatrix}\in\Gamma_0(N)$ and $\chi$ is a Dirichlet character $(\mathrm{mod}\ N)$ defined by
$$\chi(n):=\left( \frac{(-1)^k\prod_{\delta|N}\delta^{r_\delta}}{n} \right),\ if\ n>0\ and\ (n,6)=1.$$
\end{theorem}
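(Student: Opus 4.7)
My strategy is to reduce the transformation law for the eta-quotient $f$ to that of the Dedekind eta function itself, one factor $\eta(\delta z)$ at a time. I will use that for any $\gamma=\begin{pmatrix} a & b \\ c & d \end{pmatrix}\in SL_2(\mathbb{Z})$ with $c>0$, there is a $24$th root of unity $\varepsilon(a,b,c,d)$, given explicitly via a Dedekind sum and a Jacobi symbol, such that $\eta(\gamma z)=\varepsilon(a,b,c,d)(cz+d)^{1/2}\eta(z)$; the case $c=0$ is handled separately by $\eta(z+1)=e^{\pi i/12}\eta(z)$. The proof then splits into two tasks: (a) transport the transformation law from $\eta$ to each $\eta(\delta z)$ under $\Gamma_0(N)$; (b) show that the resulting roots of unity aggregate into precisely $\chi(d)$.

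For step (a), take $\gamma=\begin{pmatrix} a & b \\ c & d \end{pmatrix}\in\Gamma_0(N)$ and $\delta\mid N$. Since $\delta\mid N\mid c$, the matrix $\gamma_\delta := \begin{pmatrix} a & \delta b \\ c/\delta & d \end{pmatrix}$ lies in $SL_2(\mathbb{Z})$ and satisfies the conjugation identity $\delta\gamma z = \gamma_\delta(\delta z)$. Applying the eta transformation law to $\gamma_\delta$ yields
$$\eta(\delta\gamma z) = \varepsilon(\gamma_\delta)\,(cz+d)^{1/2}\,\eta(\delta z).$$
Raising to the power $r_\delta$ and multiplying over $\delta\mid N$ gives
$$f(\gamma z) = \left(\prod_{\delta\mid N}\varepsilon(\gamma_\delta)^{r_\delta}\right)(cz+d)^{\sum_\delta r_\delta/2}\,f(z),$$
where the exponent of $(cz+d)$ is the integer $k$ by hypothesis (iii).

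For step (b), I will substitute the closed form for each $\varepsilon(\gamma_\delta)$ and separate the product into a Jacobi-symbol piece and an exponential piece involving $e^{\pi i/12}$. The Jacobi pieces should consolidate, after quadratic reciprocity and using $(d,N)=1$, into $\left(\frac{(-1)^k\prod_\delta \delta^{r_\delta}}{d}\right)$, which is exactly the asserted $\chi(d)$ when $(d,6)=1$. The exponential piece must vanish, and this is precisely where conditions (i) and (ii) intervene: after collecting the contributions of $\delta r_\delta$ and $(N/\delta)r_\delta$ appearing in the Rademacher expansion of each $\varepsilon(\gamma_\delta)$, the two congruences modulo $24$ force all surviving $24$th roots of unity to cancel. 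A clean conceptual check is that on the generator $T$ the exponent of $e^{\pi i/12}$ is proportional to $\sum_\delta \delta r_\delta$, while on the matrix $\begin{pmatrix} 1 & 0 \\ N & 1\end{pmatrix}$ fixing the cusp $0$ the exponent is proportional to $\sum_\delta (N/\delta) r_\delta$; conditions (i) and (ii) are exactly the statements that both are $\equiv 0 \pmod{24}$, i.e.\ that $\chi$ is trivial on these two cusp stabilizers.

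The hard part will be the bookkeeping in step (b). The modulus $24$ is not cosmetic: it is dictated by the $q^{1/24}$ in $\eta(z)=q^{1/24}\prod(1-q^n)$, and verifying that (i) and (ii) are \emph{sufficient} to trivialize the exponential multiplier on all of $\Gamma_0(N)$ (not just on two generators) requires either a direct computation using the Rademacher and Newman identities for Dedekind sums, or a reduction to a convenient generating set for $\Gamma_0(N)$ together with a cocycle check. I would pursue the latter route because it makes the origin of the factor $24$ transparent and lets one read the formula for $\chi$ directly off the action on a single cusp, with (i) and (ii) playing dual roles at $\infty$ and $0$.
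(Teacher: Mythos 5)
The paper does not prove this statement at all: it is quoted verbatim as a known result of Gordon and Hughes (\cite[Theorem 3]{gordon1993multiplicative}), so there is no internal proof to compare against. Judged on its own, your step (a) is correct and complete: for $\delta\mid N\mid c$ the matrix $\gamma_\delta=\begin{pmatrix} a & \delta b\\ c/\delta & d\end{pmatrix}$ is indeed in $SL_2(\mathbb{Z})$, satisfies $\delta\gamma z=\gamma_\delta(\delta z)$, and gives $f(\gamma z)=\bigl(\prod_{\delta\mid N}\varepsilon(\gamma_\delta)^{r_\delta}\bigr)(cz+d)^k f(z)$ with $k$ an integer by (iii). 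This is exactly how the classical proofs (Newman, Ligozat, Gordon--Hughes; see also Ono, \emph{Web of Modularity}, Thm.~1.64) begin.

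The gap is in step (b), which is the actual content of the theorem and which you defer to ``bookkeeping.'' You never evaluate $\prod_{\delta}\varepsilon(\gamma_\delta)^{r_\delta}$; you only check heuristically that the exponential part is trivial on $T$ and on $\begin{pmatrix}1&0\\N&1\end{pmatrix}$, and these two matrices do not generate $\Gamma_0(N)$ in general, so this verifies nothing beyond two elements (as you concede). Your fallback, a cocycle check on a generating set, is legitimate in principle --- since $k\in\mathbb{Z}$ the multiplier $\gamma\mapsto f(\gamma z)/\bigl((cz+d)^k f(z)\bigr)$ is a genuine character of $\Gamma_0(N)$ --- but a convenient explicit generating set of $\Gamma_0(N)$ for arbitrary $N$ is not available, which is precisely why the published proofs instead substitute the explicit eta multiplier (for $c>0$, $\varepsilon(a,b,c,d)$ expressed through a Dedekind sum $s(d,c)$, or equivalently through a Jacobi symbol in $c,d$ via Petersson's formula), apply quadratic reciprocity and the hypotheses (i), (ii) to kill the $24$th-root-of-unity factor, and identify the Jacobi-symbol residue with $\left(\frac{(-1)^k\prod_\delta \delta^{r_\delta}}{d}\right)$; one must also dispose of the cases $c=0$ and $c<0$ and of $d$ with $(d,6)>1$ (the formula for $\chi$ is only given on $(n,6)=1$ and is extended as a character mod $N$). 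As written, your text is a correct plan whose decisive computation is missing, so it does not yet constitute a proof.
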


~

If $f(z)$ is holomorphic (resp. vanishes) at all cusps of $\Gamma_0(N)$, then $f(z)\in M_k(\Gamma_0(N)$ $,\chi)$ (resp. $S_k(\Gamma_0(N),\chi)$), as $\eta(z)$ never vanishes on $\mathcal{H}$. The following theorem (cf. \cite{martin1996multiplicative}) provides a useful criterion for computing the orders of an $\eta$-quotient at all cusps of $\Gamma_0(N)$.

\begin{theorem}[Y. Martin]

\label{order of cusp}
Let $c$, $d$, and $N$ be positive integers with $d\mid N$ and $(c,d)=1$. If $f(z)$ is an $\eta$-quotient that satisfies the conditions of Theorem \ref{eta-quotient}, then the order of vanishing of $f(z)$ at the cusp $c/d$ is
$$\frac{N}{24}\sum_{\delta|N}\frac{r_\delta(d^2,\delta^2)}{\delta(d^2,N)}.$$

\end{theorem}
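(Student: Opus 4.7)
The plan is to pull $f$ back to the cusp at $\infty$ via a matrix $\gamma \in SL_2(\mathbb{Z})$ sending $\infty$ to $c/d$ and read off the leading $q$-exponent. Choose $a, b \in \mathbb{Z}$ with $ad - bc = 1$ and set $\gamma = \begin{pmatrix} a & b \\ c & d \end{pmatrix}$. The width of the cusp $c/d$ on $\Gamma_0(N)$ is $h = N/(d^2, N)$, so the order of vanishing of $f$ at $c/d$, measured in the local uniformizer $q_h = e^{2\pi i z / h}$, equals $h$ times the exponent of $q = e^{2\pi i z}$ in the leading term of $f(\gamma z)$.

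Since $f(z) = \prod_{\delta \mid N} \eta(\delta z)^{r_\delta}$, it suffices to track each factor $\eta(\delta z)$ separately. The key algebraic step is the decomposition
$$\begin{pmatrix} \delta a & \delta b \\ c & d \end{pmatrix} \;=\; M_\delta \begin{pmatrix} g_\delta & * \\ 0 & \delta/g_\delta \end{pmatrix}, \qquad g_\delta := (d, \delta),$$
with $M_\delta \in SL_2(\mathbb{Z})$; such a factorization exists because $(\delta/g_\delta,\, d/g_\delta) = 1$, allowing the entries of $M_\delta$ to be produced by B\'ezout. Applying the classical transformation law $\eta(Mw) = \varepsilon(M)(c'w + d')^{1/2}\eta(w)$ together with the product expansion $\eta(w) = e^{2\pi i w / 24}\prod_{n\geq 1}(1 - e^{2\pi i n w})$, one finds that $\eta(\delta \gamma z)$ contributes a leading power $q^{g_\delta^{\,2}/(24\delta)}$ multiplied by a nowhere-vanishing analytic series near $\infty$.

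Taking the product over $\delta$ with exponents $r_\delta$, the leading $q$-exponent of $f(\gamma z)$ is $\sum_{\delta \mid N} r_\delta (d,\delta)^2/(24\delta)$. Scaling by the width $h = N/(d^2,N)$ and using the identity $(d,\delta)^2 = (d^2,\delta^2)$ converts this into the order in $q_h$, producing precisely
$$\frac{N}{24}\sum_{\delta \mid N} \frac{r_\delta (d^2, \delta^2)}{\delta (d^2, N)}.$$
The principal obstacle is to verify that the accumulated multipliers $\prod_\delta \varepsilon(M_\delta)^{r_\delta}$ and the square-root automorphy factors $(c'z + d')^{r_\delta/2}$ combine into a single nonzero holomorphic quantity at $\infty$, rather than leaving a fractional power with residual root-of-unity ambiguity; this is guaranteed precisely by the hypotheses (i)--(iii) of Theorem \ref{eta-quotient}, which force the $24$th-root and square-root ambiguities to cancel and leave the leading term cleanly expressible in $q_h$.
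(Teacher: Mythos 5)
The paper itself offers no proof of this statement: it is quoted from Martin (it is essentially Ligozat's order formula), so there is no internal argument to compare with. Your outline follows the standard route, and the bookkeeping at the end (leading exponent $(d,\delta)^2/(24\delta)$, width $h=N/(d^2,N)$, identity $(d,\delta)^2=(d^2,\delta^2)$) is exactly right. However, the execution has a genuine inconsistency at the key step. The matrix $\gamma=\begin{pmatrix} a & b \\ c & d\end{pmatrix}$ with $ad-bc=1$ sends $\infty$ to $a/c$, not to the cusp $c/d$; a scaling matrix for $c/d$ must have first column $(c,d)^{T}$. This is not a harmless relabelling, because the factorization you then invoke cannot hold in general: in $\begin{pmatrix}\delta a & \delta b \\ c & d\end{pmatrix}=M_\delta\begin{pmatrix} g_\delta & * \\ 0 & \delta/g_\delta\end{pmatrix}$ with $M_\delta\in SL_2(\mathbb{Z})$, the first column of the left side is $g_\delta$ times the (primitive) first column of $M_\delta$, so necessarily $g_\delta=\gcd(\delta a,c)=(\delta,c)$; your choice $g_\delta=(d,\delta)$ divides $c$ only when it equals $1$, precisely because $(c,d)=1$. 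With your $\gamma$ the computation would output the order at the cusp $a/c$, expressed through $(\delta,c)$, not the stated formula in $d$. The fix is to take $\gamma=\begin{pmatrix} c & b' \\ d & d'\end{pmatrix}\in SL_2(\mathbb{Z})$, so that the composite matrix for $\eta(\delta\gamma z)$ has first column $(\delta c,d)^{T}$ with gcd $(\delta,d)$ (this is where $(c,d)=1$ is used); after that correction your computation goes through and yields the claimed exponent.

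A smaller point: the multipliers $\varepsilon(M_\delta)$ and the factors $(c'z+d')^{1/2}$ are nonvanishing and holomorphic near the cusp, so they cannot affect the order of vanishing; no cancellation enforced by hypotheses (i)--(iii) of Theorem \ref{eta-quotient} is needed for the formula, which is valid for an arbitrary $\eta$-quotient. What those hypotheses buy is modularity on $\Gamma_0(N)$ with the stated character, so that ``order at a cusp'' is meaningful for $f$ as a modular form; the possibly fractional values (such as the half-integral orders at the irregular cusp $1/2$ used later in the paper) are a feature of this normalization, not an ambiguity that must be cancelled.
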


\section{Ramanujan-type congruences}

\noindent
In this section, we will establish the proof of Theorem \ref{infinitely many Ramanujan-type congruences} using the theory of modular forms. However, it's important to note that the generating function of the regular partition function is not a modular form. Nonetheless, for primes $m\geq5$, it turns out that through the careful selection of a function $h_m(n)$, we have
\begin{equation}
    \notag
    \sum_{n=0}^\infty b_k(h_m(n))q^n,
\end{equation}

\noindent
which, in fact, represents the Fourier expansion of a modular form modulo $m$. In fact, we have

~

\begin{theorem}
\label{cusp form1}
Let $m\geq3$ be a prime, then
    \begin{equation}
        \abovedisplayskip=1em
        \notag
        \sum_{n=0}^\infty b_4\left( \frac{mn-1}{8} \right)q^n\in M_{3m-3}(\Gamma_0(256))_m.
        \belowdisplayskip=1em
    \end{equation}
\end{theorem}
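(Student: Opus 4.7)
The plan is to realize $\sum_{n=0}^\infty b_4\!\bigl(\tfrac{mn-1}{8}\bigr)q^n$ modulo $m$ as the $U(m)$-image of an eta-quotient of weight $3m-3$ on $\Gamma_0(256)$, and then transfer to level $256$ via the standard Hecke-operator trick. First, the generating function for $b_4$ gives
\begin{equation*}
\frac{\eta(32z)}{\eta(8z)} \;=\; q\prod_{n=1}^\infty\frac{1-q^{32n}}{1-q^{8n}} \;=\; \sum_{n=0}^\infty b_4(n)\,q^{8n+1},
\end{equation*}
so, with the convention that $b_4(k)=0$ unless $k\in\mathbb{Z}_{\geq 0}$,
\begin{equation*}
\frac{\eta(32z)}{\eta(8z)}\;\Big|\;U(m) \;=\; \sum_{n=0}^\infty b_4\!\left(\frac{mn-1}{8}\right) q^n.
\end{equation*}

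Next, I would construct an eta-quotient $F_m(z)=\prod_{\delta\mid 256}\eta(\delta z)^{r_\delta}$ with (a) half-sum of exponents equal to $3m-3$ and the two $24$-congruences of Theorem \ref{eta-quotient} satisfied; (b) non-negative order of vanishing at every cusp of $\Gamma_0(256)$, verified via Theorem \ref{order of cusp}, so that $F_m\in M_{3m-3}(\Gamma_0(256),\chi_m)$ for an appropriate Dirichlet character $\chi_m$; and (c) the congruence $F_m(z)\equiv \eta(32z)/\eta(8z)\pmod m$. Property (c) is enforced through Fermat's little theorem in the form $\eta(\delta z)^m\equiv \eta(m\delta z)\pmod m$: writing $r_\delta=s_\delta+mt_\delta$, I would pick the $s_\delta$ so that $\prod\eta(\delta z)^{s_\delta}=\eta(32z)/\eta(8z)$, and the $t_\delta$ so that the residual $\prod\eta(m\delta z)^{t_\delta}$ reduces to $1$ modulo $m$.

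With $F_m$ in hand, the argument concludes by the Hecke trick: since $\gcd(m,256)=1$ and $k:=3m-3\geq 6$, the operator $T(m)$ preserves $M_k(\Gamma_0(256),\chi_m)$, and from
\begin{equation*}
F_m\mid T(m) \;=\; F_m\mid U(m)\;+\;\chi_m(m)\,m^{k-1}\,F_m(mz),
\end{equation*}
together with $m\mid m^{k-1}$, one obtains $F_m\mid U(m)\equiv F_m\mid T(m)\pmod m$, so $F_m\mid U(m)\in M_{3m-3}(\Gamma_0(256))_m$. Combining this with (c) and the $U(m)$-identity above yields the desired inclusion.

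The main obstacle is step (c). Since $6(m-1)\not\equiv 0\pmod m$, one cannot use a two-factor ansatz $\eta(8z)^\alpha\eta(32z)^\beta$ and arrange $\alpha\equiv -1$, $\beta\equiv 1\pmod m$ with the correct total weight. The eta-quotient must therefore involve more than two divisors of $256$ (for instance $\eta(z)$, $\eta(16z)$, or $\eta(256z)$ as well), chosen so that the two $24$-congruences, the nine cusp-holomorphy inequalities, and the mod-$m$ cancellation of the residual factor $\prod\eta(m\delta z)^{t_\delta}$ are all met at once. Carrying out this balance explicitly—and checking integrality of the Fourier coefficients throughout—is the computational heart of the proof.
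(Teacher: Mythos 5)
Your reduction steps are sound: the identity $\bigl(\eta(32z)/\eta(8z)\bigr)\,|\,U(m)=\sum_{n\ge0} b_4\!\left(\frac{mn-1}{8}\right)q^n$ is correct, and so is the observation that $U(m)\equiv T(m)\pmod m$ on a space of weight at least $2$ and level prime to $m$. The genuine gap is exactly the step you defer, your item (c), and it is not merely a computation to be carried out: the ansatz you propose cannot be realized. If $F_m(z)=\prod_{\delta\mid 256}\eta(\delta z)^{r_\delta}$ with $r_\delta=s_\delta+mt_\delta$ and $\prod_{\delta}\eta(\delta z)^{s_\delta}=\eta(32z)/\eta(8z)$, then the weight of $F_m$ is $\frac12\sum_\delta s_\delta+\frac m2\sum_\delta t_\delta=\frac m2\sum_\delta t_\delta$, and setting this equal to $3m-3$ forces $m\mid 6(m-1)$, i.e.\ $m\mid 6$; so for every prime $m\ge5$ no choice of exponents exists at all (for $m=3$ the weight count allows $\sum_\delta t_\delta=4$, but then the residual factor $\prod_\delta\eta(3\delta z)^{t_\delta}$ would be a nonconstant eta-quotient congruent to $1$ modulo $3$, and a logarithmic-derivative check forces every $t_\delta\equiv0\pmod 3$, contradicting $\sum_\delta t_\delta=4$). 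More conceptually, $\eta(32z)/\eta(8z)$ has order $-1$, in the normalization of Theorem \ref{order of cusp}, at every cusp $c/d$ of $\Gamma_0(256)$ with $d\mid 8$, so any holomorphic $F_m$ on $\Gamma_0(256)$ congruent to it modulo $m$ would have to contain a factor that is $\equiv1$ at infinity yet cancels these poles; the natural candidates that do cancel them, such as powers of $\eta^{m}(8z)/\eta(8mz)$, live on level $256m$ rather than $256$, while a factor like $E_{m-1}^3\equiv1$ cancels nothing. No argument is offered that such an $F_m$ exists, and this is the heart of the theorem.

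The paper's proof is engineered precisely around this obstruction, and it does so by \emph{not} demanding a correction factor congruent to $1$. It multiplies $\eta(4z)/\eta(z)$ by the auxiliary quotient $\eta^{a}(4mz)\eta^{6}(2mz)/\eta^{a}(mz)$ with $a=4-(m\bmod 8)$, which is supported on powers of $q^{m}$. Modulo $m$ this product is a cusp form of weight $3m$ on $\Gamma_0(4)$ with nebentypus $\chi_4$, so $U(m)\equiv T(m)$ may be applied there; because the auxiliary factor is a series in $q^{m}$, it passes through $U(m)$ and can then be divided off exactly. Holomorphy after this division is restored by factoring $f(m;z)\,|\,T(m)=\eta^4(z)\eta^2(2z)\eta^4(4z)\,g(m;z)$ with $g(m;z)\in M_{3m-5}(\Gamma_0(4))$, using that $\eta^4(z)\eta^2(2z)\eta^4(4z)$ has minimal vanishing at every cusp; only at the very end does one substitute $z\mapsto 8z$ and multiply by the weight-two quotient $\eta^{4+a}(8z)\eta^{4-a}(32z)/\eta^{4}(16z)$ to land in $M_{3m-3}(\Gamma_0(256))$. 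The essential idea missing from your proposal is this use of a correction supported on $q^{m}$-powers that is stripped away after $U(m)$, together with the passage through the small level $4$ before rescaling to level $256$.
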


\noindent
The case $m=2$ will be discussed in Section $\ref{more example}$.

~

\begin{theorem}
\label{cusp form2}
Let $m\geq7$ be a prime, then
\begin{equation}
    \abovedisplayskip=1em
    \notag
    \sum_{n=0}^\infty b_6\left( \frac{mn-5}{24} \right)q^n\in S_{2m-2}(\Gamma_0(3456),\chi_6)_m,
    \belowdisplayskip=1em
\end{equation}

\noindent
where $\chi_{6}(n)=\left( \frac{6}{n} \right)$. In addition, 
\begin{equation}
    \abovedisplayskip=1em
    \notag
    \sum_{n=0}^\infty b_6\left( \frac{25n-5}{24} \right)q^n\in S_{48}(\Gamma_0(3456),\chi_6)_5.
    \belowdisplayskip=1em
\end{equation}
\end{theorem}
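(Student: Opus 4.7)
The plan is to adapt the argument used for Theorem \ref{cusp form1}. First, I would rewrite the generating function for $b_6(n)$ as an eta-quotient: a direct computation from $\sum_n b_6(n)\,q^n = \prod_n (1-q^{6n})/(1-q^n)$ gives
\[
A(z) := \frac{\eta(144z)}{\eta(24z)} = \sum_{n=0}^\infty b_6(n)\, q^{24n+5}.
\]
With the convention $b_6(x)=0$ for $x\notin\mathbb{Z}_{\geq 0}$, the target series $\sum_n b_6\!\left(\frac{mn-5}{24}\right) q^n$ coincides with $A\mid U(m)$, so the task reduces to showing that $A\mid U(m)\pmod m$ is the reduction of a cusp form in $S_{2m-2}(\Gamma_0(3456),\chi_6)$.

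The key construction is the Serre-type lift
\[
f_m(z) := A(z)\cdot \frac{\eta(24z)^{4m}}{\eta(24mz)^{4}} = \frac{\eta(144z)\,\eta(24z)^{4m-1}}{\eta(24mz)^{4}}.
\]
Serre's congruence $\eta(24z)^m\equiv\eta(24mz)\pmod m$ yields $f_m\equiv A\pmod m$ as formal $q$-expansions, hence $f_m\mid U(m)\equiv A\mid U(m)\pmod m$. A direct check of the Gordon--Hughes conditions of Theorem \ref{eta-quotient} at level $N=3456m$, with exponents $(r_{24},r_{144},r_{24m})=(4m-1,\,1,\,-4)$, gives $\sum_\delta \delta\, r_\delta=120$ and $\sum_\delta N r_\delta/\delta = 576m^2 - 120m - 576$, both $\equiv 0\pmod{24}$, and the weight equals $\tfrac12\sum_\delta r_\delta = 2m-2$. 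The nebentypus, computed from $\prod_\delta \delta^{r_\delta}=2^{12m-11}\cdot 3^{4m-3}\cdot m^{-4}$, reduces to $\chi_6(n)=\left(\frac{6}{n}\right)$ because $12m-11$ and $4m-3$ are odd (as $m$ is odd) while $m^{-4}$ is a fourth power. Hence $f_m\in M_{2m-2}(\Gamma_0(3456m),\chi_6)$.

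The main obstacle is verifying via Martin's Theorem \ref{order of cusp} that $f_m$ vanishes at every cusp $c/d$ of $\Gamma_0(3456m)$. The order there equals
\[
\frac{3456m}{24}\left[\frac{(4m-1)(d^2,576)}{24\,(d^2,3456m)}+\frac{(d^2,144^2)}{144\,(d^2,3456m)}-\frac{4\,(d^2,(24m)^2)}{24m\,(d^2,3456m)}\right],
\]
which I would analyze by splitting into cases on whether $m\mid d$ and on the $2$-adic and $3$-adic valuations of $d$. Positivity should follow for all $m\geq 7$ because the positive coefficient $4m-1$ grows with $m$ while the negative contribution is bounded. Granting cuspidality, $f_m\mid U(m)\in S_{2m-2}(\Gamma_0(3456m),\chi_6)$ by standard Atkin theory, and the descent to level $3456$ modulo $m$ is obtained by rewriting $f_m\mid U(m)$ through the mod $m$ identity $\eta(24mz)^4\equiv\eta(24z)^{4m}\pmod m$, so that the reduction is supported only at eta-factors of divisors of $3456$; equivalently, the Hecke identity $T(m)=U(m)+\chi_6(m)\,m^{2m-3}V(m)$ at level $3456$ gives $T(m)\equiv U(m)\pmod m$ since $m^{2m-3}\equiv 0$, which places the reduction in $S_{2m-2}(\Gamma_0(3456),\chi_6)_m$. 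For the exceptional case $m=5$, replace the Serre lift by $\eta(24z)^{100}/\eta(600z)^4$, which is congruent to $1$ modulo $5$ by the iterated congruence $\eta(25w)\equiv\eta(w)^{25}\pmod 5$; the resulting cusp form has weight $2\cdot 25-2=48$, and rerunning the same argument at the intermediate level $3456\cdot 25$ yields the second assertion.
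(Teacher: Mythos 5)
Your reduction of the problem to studying $A(z)=\eta(144z)/\eta(24z)=\sum_n b_6(n)q^{24n+5}$ and $A\mid U(m)$ is fine, and your verification of the Gordon--Hughes conditions, the weight $2m-2$, and the nebentypus $\chi_6$ for $f_m=\eta(144z)\eta(24z)^{4m-1}\eta(24mz)^{-4}$ is correct. The fatal gap is the holomorphy claim: $f_m$ does \emph{not} vanish (indeed is not even holomorphic) at the cusps of $\Gamma_0(3456m)$ whose denominator is divisible by $m$, so $f_m\notin M_{2m-2}(\Gamma_0(3456m),\chi_6)$ and the rest of the argument collapses. Concretely, take $d=24m$ in Theorem \ref{order of cusp} with $N=3456m$: one finds $(d^2,N)=576m$, $(d^2,24^2)=(d^2,144^2)=576$, $(d^2,(24m)^2)=576m^2$, so the order at $c/(24m)$ equals
\begin{equation}
    \notag
    \frac{3456m}{24}\left[\frac{(4m-1)\cdot 576}{24\cdot 576m}+\frac{576}{144\cdot 576m}-\frac{4\cdot 576m^2}{24m\cdot 576m}\right]
    =144m\cdot\frac{6(4m-1)+1-24m}{144m}=-5<0 .
\end{equation}
Your heuristic that ``the positive coefficient $4m-1$ grows while the negative contribution is bounded'' fails exactly at these cusps: there the gcd factor attached to $\eta(24mz)^{-4}$ also grows like $m$, the two growths cancel identically (the Serre lift $\eta(24z)^{4m}/\eta(24mz)^{4}$ has order exactly $0$, not positive, at such cusps), and what survives is the pole of $A$ itself --- which is unavoidable, since the generating function of $b_6$ is not a modular form. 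The same computation with $d=600$, $N=3456\cdot 25$ kills your proposed $m=5$ lift $\eta(24z)^{100}/\eta(600z)^{4}$ as well.

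A second, related problem is the descent step: the operator identity $T(m)=U(m)+\chi_6(m)m^{2m-3}V(m)$ is valid on forms of level $3456$ (prime to $m$), so invoking it presupposes that the mod-$m$ reduction already lives at level $3456$ --- which is precisely what has to be proved; and since $f_m$ is only weakly holomorphic, ``standard Atkin theory'' does not give $f_m\mid U(m)\in S_{2m-2}(\Gamma_0(3456m),\chi_6)$ either (handling poles at cusps after $U(m)$ is the hard content of Treneer's work and needs extra devices such as repeated $U(m)$ and multiplication by auxiliary forms). This is exactly the difficulty the paper engineers away: instead of a uniform lift it multiplies $\eta(6z)/\eta(z)$ by $\eta^a(mz)\eta^b(2mz)\eta^c(3mz)\eta^d(6mz)$ with $a,b,c,d$ chosen according to $m\bmod 24$, so that the reduction mod $m$ is an honest cusp form $\eta^{am-1}(z)\eta^{bm}(2z)\eta^{cm}(3z)\eta^{dm+1}(6z)\in S_{2m}(\Gamma_0(6))$ with explicitly positive orders at all four cusps; then $U(m)\equiv T(m)$ is applied at level $6$, the image is factored as $\eta^2(z)\eta^2(2z)\eta^2(3z)\eta^2(6z)g(m;z)$ with $g\in M_{2m-4}(\Gamma_0(6))$, and only after dividing off the auxiliary product and substituting $q\mapsto q^{24}$ does one land in $S_{2m-2}(\Gamma_0(3456),\chi_6)_m$ (with a separate level-$6$, weight-$50$ construction and $U(25)\equiv T(25)$ for $m=5$). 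To repair your proof you would need either an $m$-dependent correction of the exponents, as in the paper, or the full weakly-holomorphic machinery; as written, the argument does not go through.
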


~

\begin{proof}[Proof of Theorem \ref{cusp form1}]

We begin with an $\eta$-quotient
\begin{equation}
    \abovedisplayskip=1em
    \notag
    f(m;z):=\frac{\eta(4z)}{\eta(z)}\frac{\eta^a(4mz)}{\eta^a(mz)}\eta^6(2mz),
    \belowdisplayskip=1em
\end{equation}

\noindent
where $m':=(m\ \mathrm{mod}\ 8)$ and $a:=4-m'$.

It is straightforward to verify that $f(m;z)\equiv_m\eta^{am+1}(4z)\eta^{6m}(2z)\eta^{-am-1}(z)$ satisfies the conditions of Theorem \ref{eta-quotient}. Furthermore, by applying Theorem \ref{order of cusp}, one can compute that it attains the minimal order of vanishing of $((8-m')m+1)/8$ at the cusp $\infty$, $(mm'-1)/8$ at the cusp $0$, and $m/2$ at the cusp $1/2$.

The order is a half-integer due to the potential presence of an irregular cusp at $1/2$. Indeed, an irregular cusp of $\Gamma_0(4)$ exclusively occurs at the cusp $1/2$ with nontrivial nebentypus (see \cite[Chapter 3]{diamond2005first}).

To sum up, we have
\begin{equation}
    \abovedisplayskip=1em
    \notag
    \frac{\eta^{am+1}(4z)}{\eta^{am+1}(z)}\eta^{6m}(2z)\in S_{3m}(\Gamma_0(4),\chi_4), \text{where}\ \chi_4(n)=\bracket{\frac{-1}{n}}.
    \belowdisplayskip=1em
\end{equation}

\noindent
On the other hand,
\begin{equation}
    \abovedisplayskip=1em
    \notag
    f(m;z)=\sum_{n=0}^\infty b_4(n)q^{n+\frac{m(a+4)+1}{8}}\cdot\prod_{n=1}^\infty\frac{(1-q^{4mn})^a}{(1-q^{mn})^a}(1-q^{2mn})^6.
    \belowdisplayskip=1em
\end{equation}

\noindent
Thus,
\begin{equation}
    \abovedisplayskip=1em
    \label{after U(m)}
    \begin{aligned}
    &\ \ \ \ \frac{\eta^{am+1}(4z)}{\eta^{am+1}(z)}\eta^{6m}(2z)\ |\ U(m)\\
    &\equiv_m\left(\sum_{n=0}^\infty b_4(n)q^{n+\frac{m(a+4)+1}{8}}\ |\ U(m)\right)\cdot\prod_{n=1}^\infty\frac{(1-q^{4n})^a}{(1-q^{n})^a}(1-q^{2n})^6.
    \end{aligned}
    \belowdisplayskip=1em
\end{equation}

~

\noindent
As for the right-hand side of (\ref{after U(m)}),
\begin{equation}
\notag
\sum_{n=0}^\infty b_4(n)q^{n+\frac{m(a+4)+1}{8}}\ |\ U(m)={\sum_{n\geq0}}^* b_4(n)q^{\frac{8n+m(a+4)+1}{8m}},
\end{equation}

\noindent
where ${\sum}^*$ indicates taking integral power coefficients of $q$, i.e.,
$$8n+m(a+4)+1\equiv0\ (\mathrm{mod}\ 8m).$$

\noindent
Verifying the condition is straightforward and reveals its equivalence to $m\mid 8n+1$.

As for the left-hand side of (\ref{after U(m)}), we have
$$\frac{\eta^{am+1}(4z)}{\eta^{am+1}(z)}\eta^{6m}(2z)\ |\ U(m)\equiv_m\frac{\eta^{am+1}(4z)}{\eta^{am+1}(z)}\eta^{6m}(2z)\ |\ T(m),$$

\noindent
where $T(m)$ denotes the usual Hecke operator acting on $S_{3m}(\Gamma_0(4),\chi_4)$.

We now analyze the $\eta$-product $\eta^4(z)\eta^2(2z)\eta^4(4z)$. According to Theorem \ref{eta-quotient} and Theorem \ref{order of cusp}, it qualifies as a cusp form of weight $5$ and level $4$ with nebentypus $\chi_4$. It possesses the minimal order of vanishing of $1/2$ at the cusp $1/2$ and $1$ at the other cusps. Since $\eta(z)$ never vanishes on $\mathcal{H}$, we can write
\begin{equation}
    \abovedisplayskip=1em
    \notag
    \frac{\eta^{am+1}(4z)}{\eta^{am+1}(z)}\eta^{6m}(2z)\ |\ T(m)=\eta^4(z)\eta^2(2z)\eta^4(4z)g(m;z),
    \belowdisplayskip=1em
\end{equation}

\noindent
where $g(m;z)\in M_{3m-5}(\Gamma_0(4))$.

To summarize, we have established
\begin{equation}
    \abovedisplayskip=1em
    \label{b_4(n)}
    \sum_{\genfrac{}{}{0pt}{}{n\geq0}{m|8n+1}} b_4(n)q^{\frac{8n+m(a+4)+1}{8m}}\equiv_m\eta^4(z)\eta^2(2z)\eta^4(4z)g(m;z)\cdot\prod_{n=1}^\infty\frac{(1-q^{n})^a}{(1-q^{2n})^6(1-q^{4n})^a}.
    \belowdisplayskip=1em
\end{equation}

\noindent
By substituting $q$ with $q^{8}$ and then multiplying both sides of (\ref{b_4(n)}) by $q^{-(a+4)}$, we obtain
$$\sum_{\genfrac{}{}{0pt}{}{n\geq0}{m|8n+1}} b_4(n)q^{\frac{8n+1}{m}}\equiv_m\frac{\eta^{4+a}(8z)\eta^{4-a}(32z)}{\eta^{4}(16z)}g(m;8z),$$

\noindent
which can be written as
\begin{equation}
    \abovedisplayskip=1em
    \label{b_4(n)v2}
    \sum_{n=0}^\infty b_4\left(\frac{mn-1}{8}\right)q^n\equiv_m\frac{\eta^{4+a}(8z)\eta^{4-a}(32z)}{\eta^{4}(16z)} g(m;8z).
    \belowdisplayskip=1em
\end{equation}

\noindent
Employing Theorem \ref{eta-quotient} and Theorem \ref{order of cusp} again, one can confirm that the first term on the right-hand side of \eqref{b_4(n)v2} lies within $M_2(\Gamma_0(256))$. In fact, it possesses the minimal order of vanishing of $8-m'$ at the cusps $c/d$ for $d=1,2,4,8$; $m'$ for $d=32,64,128,256$; and $0$ for $d=16$.

Therefore we obtain
\begin{equation}
    \abovedisplayskip=1em
    \notag
    \sum_{n=0}^\infty b_4\left( \frac{mn-1}{8} \right)q^n\in M_{3m-3}(\Gamma_0(256))_m.
    \belowdisplayskip=1em
\end{equation}

\end{proof}

\begin{proof}[Proof of Theorem \ref{cusp form2}]

For a fixed prime $m\geq7$, let
\begin{equation}
    \abovedisplayskip=1em
    \notag
    f(m;z):=\frac{\eta(6z)}{\eta(z)}\eta^a(mz)\eta^b(2mz)\eta^c(3mz)\eta^d(6mz),
    \belowdisplayskip=1em
\end{equation}

\noindent
where $m':=(m\ \mathrm{mod}\ 24)$ and $a:=(m'\ \mathrm{mod}\ 5)-1$, $b:=\floor{m'/5}-1$, $c:=3-\floor{m'/5}$, $d:=3-(m'\ \mathrm{mod}\ 5)$. It is easy to show that
\begin{equation}
    \abovedisplayskip=1em
    \notag
    f(m;z)\equiv_m\eta^{am-1}(z)\eta^{bm}(2z)\eta^{cm}(3z)\eta^{dm+1}(6z)\in S_{2m}(\Gamma_0(6)).
    \belowdisplayskip=1em
\end{equation}

\noindent
In fact, the order of vanishing at the cusp $s/t$ are
\begin{equation}
    \abovedisplayskip=1em
    \notag
    \begin{cases}
        \frac{m(5m\ \mathrm{mod}\ 24)-5}{24}  & \text{ if } t=1, \\
        \frac{mm'-1}{24} & \text{ if } t=2, \\
        \frac{m(24-m')+1}{24} & \text{ if } t=3, \\
        \frac{m(24-(5m\ \mathrm{mod}\ 24))+5}{24}  & \text{ if } t=6.
    \end{cases}
    \belowdisplayskip=1em
\end{equation}

\noindent
On the other hand,
\begin{equation}
    \abovedisplayskip=1em
    \notag
    f(m;z)=\sum_{n=0}^\infty b_6(n)q^{\frac{24n+m(a+2b+3c+6d)+5}{24}}\cdot\prod_{n=1}^\infty(1-q^{mn})^a(1-q^{2mn})^b(1-q^{3mn})^c(1-q^{6mn})^d.
    \belowdisplayskip=1em
\end{equation}

\noindent
By applying the $U(m)$ operator to $f(z)$ and recognizing that $\modulo{U(m)}{T(m)}{m}$, we arrive at
\begin{equation}
    \abovedisplayskip=1em
    \label{after U/T}
    \begin{aligned}
        &\ \ \ \ {\sum_{n=0}^\infty b_6(n)q^{\frac{24n+m(a+2b+3c+6d)+5}{24}}\ |\ U(m)}\\
        &\equiv_m{\frac{\eta^{am-1}(z)\eta^{bm}(2z)\eta^{cm}(3z)\eta^{dm+1}(6z)\ |\ T(m)}{\prod_{n=1}^\infty(1-q^{n})^a(1-q^{2n})^b(1-q^{3n})^c(1-q^{6n})^d}},
    \end{aligned}
    \belowdisplayskip=1em
\end{equation}

\noindent
where $T(m)$ denotes usual Hecke operator acting on $S_{2m}(\Gamma_0(6))$. As for the left-hand side of (\ref{after U/T}), we have
\begin{equation}
    \abovedisplayskip=1em
    \notag
    \sum_{n=0}^\infty b_6(n)q^{\frac{24n+m(a+2b+3c+6d)+5}{24}}\ |\ U(m)=\sum_{\genfrac{}{}{0pt}{}{n=0}{m|24n+5}}^\infty b_6(n)q^{\frac{24n+m(a+2b+3c+6d)+5}{24m}}.
    \belowdisplayskip=1em
\end{equation}

\noindent
By utilizing Theorem \ref{eta-quotient} and Theorem \ref{order of cusp}, one can confirm that $\eta^2(z)\eta^2(2z)\eta^2(3z)$ $\eta^2(6z)$ $\in S_4(\Gamma_0(6))$ and holds an order of $1$ at all cusps. Consequently, we can write
$$\eta^{am-1}(z)\eta^{bm}(2z)\eta^{cm}(3z)\eta^{dm+1}(6z)\ |\ T(m)=\eta^2(z)\eta^2(2z)\eta^2(3z)\eta^2(6z)g(m;z),$$ 

\noindent
where $g(m;z)\in M_{2m-4}(\Gamma_0(6))$. Hence
\begin{equation}
    \abovedisplayskip=1em
    \notag
    \modulo{\sum_{\genfrac{}{}{0pt}{}{n=0}{m|24n+5}}^\infty b_6(n)q^{\frac{24n+5}{24m}}}{\eta^{2-a}(z)\eta^{2-b}(2z)\eta^{2-c}(3z)\eta^{2-d}(6z)g(m;z)}{m}.
    \belowdisplayskip=1em
\end{equation}

\noindent
Substituting $q$ with $q^{24}$ reveals that
\begin{equation}
    \abovedisplayskip=1em
    \notag
    \modulo{\sum_{\genfrac{}{}{0pt}{}{n=0}{m|24n+5}}^\infty b_6(n)q^{\frac{24n+5}{m}}}{\eta^{2-a}(24z)\eta^{2-b}(48z)\eta^{2-c}(72z)\eta^{2-d}(144z)g(m;24z)}{m}.
    \belowdisplayskip=1em
\end{equation}

\noindent
As $b_6(n)$ vanishes for non-integer $n$, thus
\begin{equation}
    \abovedisplayskip=1em
    \notag
    \modulo{\sum_{n=0}^\infty b_6\bracket{\frac{mn-5}{24}}q^{n}}{\eta^{2-a}(24z)\eta^{2-b}(48z)\eta^{2-c}(72z)\eta^{2-d}(144z)g(m;24z)}{m}.
    \belowdisplayskip=1em
\end{equation}

\noindent
Furthermore, it can be verified that
$$\eta^{2-a}(24z)\eta^{2-b}(48z)\eta^{2-c}(72z)\eta^{2-d}(144z)\in S_2(\Gamma_0(3456),\chi_6).$$

\noindent
In fact, the order of vanishing at the cusp $s/t$ is
\begin{equation}
    \abovedisplayskip=1em
    \notag
    \begin{cases}
        24-(5m\ \text{mod}\ 24) & \text{ if } t=1,2,3,4,6,8,12,24, \\
        m' & \text{ if } t=9,18,27,36,54,72,108,216, \\
        24-m' & \text{ if } t=16,32,48,64,96,128,192,384, \\
        (5m\ \text{mod}\ 24)  & \text{ if } t=144,288,432,576,864,1152,1728,3456.
    \end{cases}
    \belowdisplayskip=1em
\end{equation}

\noindent
Combining this with $g(m;24z)\in M_{2m-4}(\Gamma_0(144))$, we obtain
\begin{equation}
    \abovedisplayskip=1em
    \notag
    \sum_{n=0}^\infty b_6\bracket{\frac{mn-5}{24}}q^{n}\in S_{2m-2}(\Gamma_0(3456),\chi_{6})_m.
    \belowdisplayskip=1em
\end{equation}

\begin{remark}
    The proof is not suitable for $m=5$ because
    $$f(m;z)\equiv_m\eta^{am-1}(z)\eta^{bm}(2z)\eta^{cm}(3z)\eta^{dm+1}(6z)\notin S_{2m}(\Gamma_0(6)).$$
\end{remark}

~

\noindent
Now we turn to the case $m=5$. Let
\begin{equation}
    \abovedisplayskip=1em
    \notag
    f(z):=\frac{\eta(6z)}{\eta(z)}\frac{\eta^3(75z)\eta^2(150z)}{\eta(50z)}.
    \belowdisplayskip=1em
\end{equation}

\noindent
Note that $f(z)\equiv_5\eta^{-1}(z)\eta^{-25}(2z)\eta^{75}(3z)\eta^{51}(6z)\in S_{50}(\Gamma_0(6))$. On the other hand,
\begin{equation}
    \abovedisplayskip=1em
    \notag
    f(z)=\sum_{n=0}^\infty b_6(n)q^{n+20}\cdot\prod_{n=1}^\infty(1-q^{50n})^{-1}(1-q^{75n})^3(1-q^{150n})^2.
    \belowdisplayskip=1em
\end{equation}

\noindent
Now we use the fact that $U(25)\equiv_5T(25)$ to obtain that
\begin{equation}
    \abovedisplayskip=1em
    {\sum_{n=0}^\infty b_6(n)q^{{n+20}{}}\ |\ U(25)}\equiv_5{\frac{\eta^{-1}(z)\eta^{-25}(2z)\eta^{75}(3z)\eta^{51}(6z)\ |\ T(25)}{\prod_{n=1}^\infty(1-q^{2n})^{-1}(1-q^{3n})^3(1-q^{6n})^2}},
    \belowdisplayskip=1em
\end{equation}

\noindent
Hence
\begin{equation}
    \abovedisplayskip=1em
    \notag
    {\sum_{\genfrac{}{}{0pt}{}{n=0}{\modulo{n}{5}{25}}}^\infty b_6(n)q^{\frac{n+20}{25}}}\equiv_5{\frac{\eta^2(z)\eta^2(2z)\eta^2(3z)\eta^2(6z)g(z)}{\prod_{n=1}^\infty(1-q^{2n})^{-1}(1-q^{3n})^3(1-q^{6n})^2}},
    \belowdisplayskip=1em
\end{equation}

\noindent
where $g(z)\in M_{46}(\Gamma_0(6))$. Substituting $q$ with $q^{24}$, obtaining
\begin{equation}
    \abovedisplayskip=1em
    \notag
    {\sum_{\genfrac{}{}{0pt}{}{n=0}{\modulo{n}{5}{25}}}^\infty b_6(n)q^{\frac{24n+5}{25}}}\equiv_5{{\eta^2(24z)\eta^3(48z)\eta^{-1}(72z)g(24z)}},
    \belowdisplayskip=1em
\end{equation}

\noindent
where $\eta^2(24z)\eta^3(48z)\eta^{-1}(72z)\in S_2(\Gamma_0(3456),\chi_6)$ and $g(24z)\in M_{46}(\Gamma_0(144))$. Finally we obtain
\begin{equation}
    \abovedisplayskip=1em
    \notag
    \sum_{n=0}^\infty b_6\bracket{\frac{25n-5}{24}}q^n\in S_{48}(\Gamma_0(3456),\chi_6)_5.
    \belowdisplayskip=1em
\end{equation}

\end{proof}

\noindent
We require a crucial result by Serre (see \cite[(6.4)]{serre1974divisibilite}, \cite[Lemma 2.63]{ono2004web}), which play a pivotal role in establishing the existence of Ramanujan-type congruences.

\begin{theorem}[J.-P. Serre]
\label{Serre's theorem}
The set of primes $l\equiv-1\ (\mathrm{mod}\ Nm)$ such that
$$f\ |\ T(l)\equiv0\ (\mathrm{mod}\ m)$$

\noindent
for each $f(z)\in M_k(\Gamma_0(N),\psi)_m$ has positive density, where $T(l)$ denotes the usual Hecke operator acting on $M_k(\Gamma_0(N),\psi)$.
\end{theorem}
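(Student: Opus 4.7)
My plan is to combine the Galois-theoretic structure of mod-$m$ Hecke modules with the Chebotarev density theorem.

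First I would reduce the statement to a single assertion about the Hecke algebra. The space $M_k(\Gamma_0(N),\psi)_m$ is a finite-dimensional $\mathbb{F}_m$-vector space, so the operators $\{T(l) : l \nmid Nm\}$ generate a finite commutative $\mathbb{F}_m$-subalgebra $\mathbb{T}$ of $\mathrm{End}(M_k(\Gamma_0(N),\psi)_m)$. The condition that $f\mid T(l) \equiv 0 \pmod m$ for every $f$ is equivalent to $T(l) = 0$ in $\mathbb{T}$, which in turn is equivalent to $\lambda(T(l)) = 0$ for every $\mathbb{F}_m$-algebra homomorphism $\lambda : \mathbb{T} \to \bar{\mathbb{F}}_m$. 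Since $\mathbb{T}$ is finite, there are only finitely many such systems of eigenvalues $\lambda$.

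Second, to each $\lambda$ I would attach a continuous semisimple Galois representation $\rho_\lambda : \mathrm{Gal}(\bar{\mathbb{Q}}/\mathbb{Q}) \to GL_2(\bar{\mathbb{F}}_m)$, unramified outside $Nm$, with $\mathrm{tr}\,\rho_\lambda(\mathrm{Frob}_l) = \lambda(T(l))$ and $\det \rho_\lambda(\mathrm{Frob}_l) = \psi(l)l^{k-1}$. For cuspidal systems this is Deligne (with Deligne-Serre in weight $1$); for Eisenstein systems $\rho_\lambda$ is reducible and built explicitly from Dirichlet characters. Let $L$ be the finite Galois extension of $\mathbb{Q}$ cut out jointly by all the $\rho_\lambda$ together with $\mathbb{Q}(\zeta_{Nm})$; Chebotarev then converts statements about Frobenius conjugacy classes in $\mathrm{Gal}(L/\mathbb{Q})$ into density statements about primes.

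Third, I would exhibit a single conjugacy class that witnesses both conditions simultaneously. Complex conjugation $c \in \mathrm{Gal}(L/\mathbb{Q})$ restricts to $-1$ in $(\mathbb{Z}/Nm)^*$, so primes $l$ whose Frobenius is conjugate to $c$ satisfy $l \equiv -1 \pmod{Nm}$ and form a positive-density set. Moreover, each $\rho_\lambda$ arising from a modular form is odd, so $\rho_\lambda(c)$ has eigenvalues $\{+1,-1\}$ and therefore trace $0$; for a reducible Eisenstein $\rho_\lambda = \chi_1 \oplus \chi_2$ with $\chi_1 \chi_2 = \psi$, the parity relation $\chi_1(-1)\chi_2(-1) = (-1)^k$ forces $\chi_1(-1) + \chi_2(-1)(-1)^{k-1} = 0$, giving the same vanishing. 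Hence $\lambda(T(l)) = 0$ for every $\lambda$ and every such prime $l$, i.e., $T(l) = 0$ in $\mathbb{T}$.

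The main obstacle is the uniform availability of the Galois representations $\rho_\lambda$ across cuspidal and Eisenstein eigensystems, especially in weight $1$ where one needs the delicate Deligne-Serre construction, and in characteristic $m=2,3$ where small image phenomena can complicate the oddness argument. Once these are handled, complex conjugation serves as a single witness that works for every $\lambda$ at once, so Chebotarev delivers the positive-density conclusion uniformly.
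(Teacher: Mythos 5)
The paper itself offers no proof of this statement---it is imported from Serre (the cited (6.4) of the 1974 paper) and Ono's book---so your outline has to stand on its own, and it contains one genuine gap. In your first step you claim that $f\mid T(l)\equiv 0\pmod m$ for every $f$ (i.e.\ $T(l)=0$ in the Hecke algebra $\mathbb{T}\subset\mathrm{End}(M_k(\Gamma_0(N),\psi)_m)$) is \emph{equivalent} to $\lambda(T(l))=0$ for every $\mathbb{F}_m$-algebra homomorphism $\lambda:\mathbb{T}\to\bar{\mathbb{F}}_m$. Only one direction is true. The common kernel of all such $\lambda$ is the nilradical of the finite commutative $\mathbb{F}_m$-algebra $\mathbb{T}$, and mod-$m$ Hecke algebras are in general not reduced (non-reducedness is exactly what happens in the presence of congruences between eigenforms). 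So your Chebotarev argument with complex conjugation only shows that $T(l)$ is \emph{nilpotent} on $M_k(\Gamma_0(N),\psi)_m$, which does not yield $f\mid T(l)\equiv 0\pmod m$ for each individual $f$, and that is what the theorem asserts.

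The way Serre closes this gap is to avoid the mod-$m$ eigensystem reduction entirely and work in characteristic zero: decompose $M_k(\Gamma_0(N),\psi)\otimes\bar{\mathbb{Q}}$ into a genuine Hecke eigenbasis (newforms of level dividing $N$, their oldform shifts $g(dz)$, and Eisenstein eigenforms), and use boundedness of denominators to fix an integer $D$ so that every form with integral coefficients is a $\bar{\mathbb{Z}}$-combination of these eigenforms divided by $D$. Then one imposes, via Chebotarev applied to the attached $\ell$-adic (or mod $mD$) representations together with $\mathbb{Q}(\zeta_{NmD})$, that $\mathrm{Frob}_l$ be complex conjugation; oddness gives eigenvalue $a_g(l)\equiv 0\pmod{mD}$ for each cuspidal eigenform (in characteristic zero the trace of conjugation is exactly $0$, so no small-characteristic worry), your parity computation handles the Eisenstein eigenvalues $\chi_1(l)+\chi_2(l)l^{k-1}$, and $T(l)$ acts on each oldform $g(dz)$, $l\nmid dN$, by the same eigenvalue. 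Clearing the denominator $D$ then gives $f\mid T(l)\equiv 0\pmod m$ for \emph{every} integral $f$, i.e.\ $T(l)$ is literally zero on the reduced space, while $\mathrm{Frob}_l=c$ also forces $l\equiv-1\pmod{Nm}$ and Chebotarev supplies the positive density. With that repair your strategy becomes the standard (Serre's) proof; as written, the passage from ``all eigenvalues vanish'' to ``the operator vanishes'' is unjustified.
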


\noindent
Now, Theorem \ref{infinitely many Ramanujan-type congruences} follows immediately as a corollary of the next two theorems.

\begin{theorem}
\label{main theorem}
Let $m\geq3$ be a prime. A positive density of the primes $l$ possess the property that
\begin{equation}
    \abovedisplayskip=1em
    \notag
    b_4\left( \frac{mln-1}{8} \right)\equiv0\ (\mathrm{mod}\ m)
    \belowdisplayskip=1em
\end{equation}

\noindent
for every nonnegative integer $n$ coprime to $l$.
\end{theorem}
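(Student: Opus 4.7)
The plan is to combine Theorem \ref{cusp form1} with Serre's theorem (Theorem \ref{Serre's theorem}) in a short computation. First, I would set
\[
F(z) := \sum_{n=0}^\infty a(n)q^n, \qquad a(n) := b_4\bracket{\tfrac{mn-1}{8}},
\]
so that by Theorem \ref{cusp form1}, $F(z) \in M_{3m-3}(\Gamma_0(256))_m$; since no nebentypus appears in the statement, I treat the character as trivial.

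Next, I would apply Theorem \ref{Serre's theorem} with $N = 256$, $k = 3m-3$, and trivial character $\psi$, to obtain a set of positive density of primes $l \equiv -1 \pmod{256m}$ for which
\[
F \mid T(l) \equiv 0 \pmod{m}.
\]

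The concluding step would be to unpack the Hecke operator on $q$-expansions. By definition,
\[
F \mid T(l) = \sum_{n=0}^\infty \bracket{a(ln) + l^{k-1} a(n/l)} q^n,
\]
with the convention $a(n/l) := 0$ whenever $l \nmid n$. For every nonnegative integer $n$ coprime to $l$ the second summand vanishes, so reading off the coefficient of $q^n$ yields $a(ln) \equiv 0 \pmod m$, which is exactly $b_4\bracket{\tfrac{mln-1}{8}} \equiv 0 \pmod m$, as desired.

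The real work is already contained in Theorem \ref{cusp form1}, so I do not foresee a genuine obstacle in this argument; the main point of care is to confirm that the nebentypus of $F$ is trivial (so that no spurious $\chi(l)$ factor appears in the Hecke recursion) and to apply the $T(l)$-formula cleanly for indices $n$ coprime to $l$. In spirit, Theorem \ref{cusp form1} reduces the problem to producing Hecke eigenvalues that annihilate $F$ modulo $m$, and Serre's theorem supplies infinitely many such eigenvalues with positive density.
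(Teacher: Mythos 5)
Your proposal is correct and follows essentially the same route as the paper: invoke Theorem \ref{cusp form1} to place $\sum b_4\bracket{\frac{mn-1}{8}}q^n$ in $M_{3m-3}(\Gamma_0(256))_m$ with trivial nebentypus, apply Serre's Theorem \ref{Serre's theorem} to get a positive-density set of primes $l$ annihilating it under $T(l)$ mod $m$, and then read off the coefficient of $q^n$ for $(n,l)=1$, where the second Hecke term $l^{3m-4}b_4\bracket{\frac{mn/l-1}{8}}$ vanishes. This matches the paper's argument step for step.
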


\begin{theorem}
\label{main theorem2}
Let $m\geq7$ be a prime. A positive density of the primes $l$ possess the property that
\begin{equation}
    \abovedisplayskip=1em
    \notag
    b_6\left( \frac{mln-5}{24} \right)\equiv0\ (\mathrm{mod}\ m)
    \belowdisplayskip=1em
\end{equation}

\noindent
for every nonnegative integer $n$ coprime to $l$. In addition, A positive density of the primes $l$ possess the property that
\begin{equation}
    \abovedisplayskip=1em
    \notag
    b_6\left( \frac{25ln-5}{24} \right)\equiv0\ (\mathrm{mod}\ 5)
    \belowdisplayskip=1em
\end{equation}

\noindent
for every nonnegative integer $n$ coprime to $l$.
\end{theorem}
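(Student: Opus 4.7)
The plan is to derive Theorem \ref{main theorem2} as a direct corollary of Theorem \ref{cusp form2} and Serre's theorem (Theorem \ref{Serre's theorem}), by unwinding the action of the Hecke operator $T(l)$ on the relevant cusp forms modulo a prime.

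\textbf{First statement.} Fixing a prime $m \geq 7$, I would set $F_m(z) := \sum_{n=0}^\infty b_6\!\left(\frac{mn-5}{24}\right) q^n$. The first half of Theorem \ref{cusp form2} already places $F_m$ in $S_{2m-2}(\Gamma_0(3456), \chi_6)_m$, so I can apply Serre's theorem with $N = 3456$, $k = 2m-2$, $\psi = \chi_6$ to obtain a positive density of primes $l \equiv -1 \pmod{3456 m}$ for which $F_m \mid T(l) \equiv 0 \pmod m$. Writing $a(n) := b_6\!\left(\frac{mn-5}{24}\right)$ and expanding $T(l)$ then gives
$$0 \equiv F_m \mid T(l) = \sum_{n=0}^\infty \left(a(ln) + \chi_6(l)\, l^{2m-3}\, a(n/l)\right) q^n \pmod{m}.$$
For $n$ coprime to $l$ the second term vanishes, leaving $a(ln) \equiv 0 \pmod m$, which is precisely $b_6\!\left(\frac{mln-5}{24}\right) \equiv 0 \pmod m$.

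\textbf{Second statement.} The $m = 5$ addendum would follow by the identical argument, but using the second half of Theorem \ref{cusp form2}: the series $F_5(z) := \sum_{n=0}^\infty b_6\!\left(\frac{25n-5}{24}\right) q^n$ lies in $S_{48}(\Gamma_0(3456), \chi_6)_5$, so Serre's theorem with $N = 3456$ and $m = 5$ supplies a positive density of primes $l \equiv -1 \pmod{17280}$ with $F_5 \mid T(l) \equiv 0 \pmod 5$, and unwinding $T(l)$ once more yields the claimed congruence for every $n$ coprime to $l$.

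\textbf{Main obstacle.} I do not expect any genuinely hard step at this stage: all of the substantive work has already been absorbed into Theorem \ref{cusp form2} (the $\eta$-quotient construction and the cusp-order bookkeeping) and into Serre's theorem, which I would invoke as a black box. The only minor care required is the convention that $b_6$ vanishes at non-integer arguments, which guarantees that the Hecke-operator identity above remains well-posed even when $(mn-5)/24$ fails to be an integer.
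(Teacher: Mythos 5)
Your proposal is correct and follows essentially the same route as the paper: invoke Theorem \ref{cusp form2} to place the generating series in $S_{2m-2}(\Gamma_0(3456),\chi_6)_m$ (resp.\ $S_{48}(\Gamma_0(3456),\chi_6)_5$), apply Serre's theorem to get a positive density of primes $l$ annihilating it under $T(l)$ mod $m$, and read off the congruence from the Hecke expansion since the $a(n/l)$ term vanishes when $(n,l)=1$. This matches the paper's proof, which handles $m=5$ by the same argument.
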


~

\begin{proof}[Proof of Theorem \ref{main theorem}]Let
\begin{equation}
    \abovedisplayskip=1em
    \notag
    F(m;z)=\sum_{n=0}^\infty b_4\left( \frac{mn-1}{8} \right)q^n,
    \belowdisplayskip=1em
\end{equation}

\noindent
then $F(m;z)\in M_{3m-3}(\Gamma_0(256))_m$.

For a fixed prime $m\geq5$, let $S(m)$ denote the set of primes $l$ such that
$$f\ |\ T(l)\equiv0\ (\mathrm{mod}\ m)$$

\noindent
for every $f\in M_{3m-3}(\Gamma_0(256))$. According to Theorem \ref{Serre's theorem}, the set $S(m)$ includes a positive density of primes. Therefore, for $l\in S(m)$, we have
$$F(m;z)\ |\ T(l)\equiv0\ (\mathrm{mod}\ m).$$

\noindent
Subsequently, employing the theory of Hecke operators, we obtain
\begin{equation}
    \abovedisplayskip=1em
    \notag
    F(m;z)\ |\ T(l)=\sum_{n=0}^\infty\left( b_4\left( \frac{mln-1}{8} \right)+l^{3m-4}b_4\left( \frac{mn/l-1}{8} \right) \right)q^n\equiv0\ (\mathrm{mod}\ m).
    \belowdisplayskip=1em
\end{equation}

\noindent
Since $b_4(n)$ vanishes when $n$ is not an integer, we have
$$b_4\left(\frac{mn/l-1}{8}\right)=0$$

\noindent
for each $n$ coprime to $l$. Consequently,
$$b_4\left(\frac{mln-1}{8}\right)\equiv 0\ (\mathrm{mod}\ m)$$

\noindent
holds for every integer $n$ coprime to $l$. Moreover, the set of such primes $l$ possesses a positive density.

\end{proof}

\begin{proof}[Proof of Theorem \ref{main theorem2}]

Let
\begin{equation}
    \abovedisplayskip=1em
    \notag
    F(m;z)=\sum_{n=0}^\infty b_6\left( \frac{mn-5}{24} \right)q^n\in S_{2m-2}(\Gamma_0(3456),\chi_6)_m.
    \belowdisplayskip=1em
\end{equation}

\noindent
According to Theorem \ref{Serre's theorem}, the set of primes $l$ for which
$$ F(m;z)\ |\ T(l)\equiv0\ (\mathrm{mod}\ m)$$

\noindent
has positive density. Here, $T(l)$ denotes the Hecke operator acting on $S_{2m-2}(\Gamma_0(3456),$ $\chi_6)$. Furthermore, applying the theory of Hecke operators, we obtain
\begin{equation}
    \abovedisplayskip=1em
    \notag
    \sum_{n=0}^\infty F(m;z)\ |\ T(l)=\sum_{n=0}^\infty \bracket{b_6\left( \frac{mln-5}{24} \right)+\bracket{\frac{6}{l}}l^{2m-3}b_6\left( \frac{mn/l-5}{24} \right)}q^n.
    \belowdisplayskip=1em
\end{equation}

\noindent
Since $b_6(n)$ vanishes for non-integer $n$, we have
$$b_6\left(\frac{mn/l-5}{24}\right)=0$$

\noindent
when $(n,l)=1$. As a result,
\begin{equation}
    \abovedisplayskip=1em
    \notag
    \modulo{b_6\bracket{\frac{mln-5}{24}}}{0}{m}
    \belowdisplayskip=1em
\end{equation}

\noindent
holds for every integer $n$ with $(n,l)=1$. Moreover, the set of such primes $l$ possesses a positive density.

The proof for $m=5$ is similar.

\end{proof}

\noindent
Considering that the number of choices for $l$ is infinite, let us select $l>3$. After replacing $n$ with $12nl+ml+12$, we observe that $b_4(ml^2n+ml+(m^2l^2-1)/12)\equiv0\ (\mathrm{mod}\ m)$ holds for every nonnegative integer $n$. A similar approach can be employed for $b_6(n)$. Consequently, we establish Theorem \ref{infinitely many Ramanujan-type congruences}. Moreover, since the options for $l$ are unlimited, in conjunction with the Chinese Remainder Theorem and previous results, we derive

~

\begin{corollary}
If $m$ is a squarefree integer, then there exist infinitely many Ramanujan-type congruences of $b_4(n)$ modulo $m$. Similarly, if $k$ is a squarefree integer coprime to $10$, then there exist infinitely many Ramanujan-type congruences of $b_6(n)$ modulo $k$.
\end{corollary}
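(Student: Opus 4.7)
The plan is to combine the single-prime Ramanujan-type congruences already established in Theorems \ref{main theorem} and \ref{main theorem2} via the Chinese Remainder Theorem; the corollary should follow as a bookkeeping consequence of those theorems without any new analytic input.

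First, I would factor the squarefree integer $m$ as a product of distinct primes, $m=p_1p_2\cdots p_r$, and for each $p_i$ extract a concrete Ramanujan-type congruence modulo $p_i$. Specifically, for each $i$, Theorem \ref{infinitely many Ramanujan-type congruences}(1) combined with the substitution $n\mapsto 12nl+p_il+12$ described in the paragraph following the proof of Theorem \ref{main theorem2} (and, when $p_i=2$, the results of Section \ref{more example}) provides a congruence of the shape
\begin{equation}
    \notag
    b_4\bracket{p_il_i^{2}n+B_i}\equiv 0\ \bracket{\mathrm{mod}\ p_i}\quad\text{for every }n\geq 0,
\end{equation}
where $l_i$ may be chosen from an infinite set of primes supplied by Theorem \ref{Serre's theorem} and $B_i$ is explicit in $p_i$ and $l_i$.

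The key step is then to select the primes $l_1,\ldots,l_r$ pairwise distinct and coprime to $m$. This is possible because each admissible set of $l$'s has positive density and only finitely many primes need be avoided. With that choice, the moduli $A_i:=p_il_i^{2}$ are pairwise coprime, so the Chinese Remainder Theorem produces a single residue class $n\equiv N\pmod{A}$, with $A:=\prod_i A_i$, on which $b_4$ vanishes modulo every $p_i$ simultaneously; squarefreeness of $m$ then upgrades this to $b_4(n)\equiv 0\ (\mathrm{mod}\ m)$ on the whole progression. Varying any one $l_i$ within its density set changes $A$ and produces a genuinely new arithmetic progression, so the construction yields infinitely many distinct Ramanujan-type congruences modulo $m$.

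The $b_6$ part is handled by the same argument, invoking Theorem \ref{infinitely many Ramanujan-type congruences}(2) for the prime divisors of $k$ that are $\geq 7$ (coprimality to $10$ automatically rules out $2$ and $5$) together with the known Ahmed--Baruah congruences \cite{ahmed2016new} when $3\mid k$. I do not anticipate any real obstacle; the only point requiring care is ensuring the $l_i$ can be chosen pairwise distinct and coprime to $\prod_j p_j$, which is immediate from positive density and is the one place where the Serre density theorem (as opposed to a mere existence statement) is being used.
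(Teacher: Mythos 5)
Your proposal is correct and follows essentially the same route the paper intends: the paper derives the corollary in one line from the per-prime families (Theorems \ref{main theorem} and \ref{main theorem2}, the mod $2$ result for $b_4$ and the known mod $3$ results for $b_6$) ``in conjunction with the Chinese Remainder Theorem,'' which is exactly the gluing of pairwise coprime progressions $p_il_i^2n+B_i$ that you carry out, with infinitude coming from varying the $l_i$. Your write-up simply makes explicit the bookkeeping (choosing the $l_i$ distinct and coprime to $m$) that the paper leaves implicit.
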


While we can derive results about the distribution of nonzero residues similar to the previous paper, it is important to note that this is not the main focus of our current work.

~

\section{Examples of Ramanujan-type congruences}

\label{more example}

Here, we introduce a theorem by Sturm \cite[Theorem 1]{sturm1987congruence}, which offers a useful criterion for determining when modular forms with integer coefficients become congruent to zero modulo a prime through finite computation.

\begin{theorem}[J. Sturm]
\label{Sturm's theorem}
Suppose $f(z)=\sum_{n=0}^\infty a(n)q^n\in M_k(\Gamma_0(N),\chi)_m$ such that
$$a(n)\equiv0\ (\mathrm{mod}\ m)$$

\noindent
for all $n\leq \frac{kN}{12}\prod_{p|N}\left( 1+\frac1p \right)$. Then $a(n)\equiv0\ (\mathrm{mod}\ m)$ for all $n\in\mathbb{Z}$.
\end{theorem}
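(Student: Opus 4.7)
\bigskip

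\noindent
\textbf{Proof proposal.} The plan is by contradiction: assume the conclusion fails and let $n_0=\min\{n\geq0:a(n)\not\equiv0\ (\mathrm{mod}\ m)\}$, so that $n_0>B:=\frac{kN}{12}\prod_{p\mid N}(1+1/p)$. It then suffices to show that a nonzero class $\bar f\in M_k(\Gamma_0(N),\chi)_m$ cannot vanish to order greater than $B$ at the cusp $\infty$; this is a valence-type bound modulo $m$.

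First, I would reduce to the case of trivial character. Let $d$ be the order of $\chi$. Then $f^d\in M_{kd}(\Gamma_0(N))$ has trivial nebentypus and integer Fourier coefficients, and the order of vanishing of $\overline{f^d}$ at $\infty$ is $dn_0$. Since the Sturm bound for weight $kd$ with trivial character is exactly $dB$, establishing the theorem in the trivial-character case for $f^d$ yields $dn_0\leq dB$ and hence $n_0\leq B$.

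Second, in the trivial-character case, interpret $\bar f$ as a global section of the invertible sheaf $\omega^{\otimes k}$ on the modular curve $X_0(N)$ after base change to $\mathbb{F}_m$. The key geometric input is $\deg(\omega^{\otimes k})=\frac{k}{12}[\mathrm{SL}_2(\mathbb{Z}):\Gamma_0(N)]=B$. Because the divisor of a nonzero section has total degree equal to that of the line bundle and all local orders are nonnegative, the order of $\bar f$ at the cusp $\infty$ is at most $B$, contradicting $\mathrm{ord}_\infty(\bar f)=n_0>B$.

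The main obstacle is making the valence formula rigorous modulo the small primes $m\in\{2,3\}$, where elliptic points of $\mathrm{SL}_2(\mathbb{Z})$ introduce genuine subtleties in the definition of $\omega$ and in the degree computation on $X_0(N)_{\mathbb{F}_m}$. Sturm's original argument sidesteps this geometric machinery by multiplying $\bar f$ by suitable auxiliary modular forms congruent to $1$ modulo $m$ (for instance, products of theta series or Eisenstein series with constant term $1\bmod m$), whose existence enlarges the weight into a range where an elementary $q$-expansion argument applies; one then transfers the order bound back to $\bar f$. I would follow this elementary route to avoid setting up the scheme-theoretic theory of modular forms modulo $m$ from scratch.
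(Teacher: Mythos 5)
You should first note that the paper does not prove this statement at all; it is quoted from Sturm's paper with a citation, so your proposal has to stand on its own as a proof of the result. Your framing is fine as far as it goes: the minimal-counterexample setup and the reduction to trivial nebentypus via $f^d$ are correct (the leading mod-$m$ coefficient of $\overline{f^d}$ is $a(n_0)^d\neq0$ because $\mathbb{F}_m$ is a field, and the trivial-character bound for weight $kd$ is exactly $dB$). The problem is that the heart of the theorem --- that a nonzero element of $M_k(\Gamma_0(N))_m$ cannot vanish at $\infty$ to order exceeding $\frac{k}{12}[\mathrm{SL}_2(\mathbb{Z}):\Gamma_0(N)]$ --- is never actually established. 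The geometric route is left as a sketch: besides the elliptic-point/stack issues at $m\in\{2,3\}$ that you flag, you would also need the comparison between classical forms with integer coefficients reduced mod $m$ and sections of $\omega^{\otimes k}$ on $X_0(N)_{\mathbb{F}_m}$ ($q$-expansion principle, base change), and, more seriously, the theorem as stated allows $m\mid N$, in which case $X_0(N)_{\mathbb{F}_m}$ is not smooth and typically not irreducible, so the clean ``order at a point is at most the degree of the line bundle'' argument does not apply in the form you invoke. None of this is addressed.

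The elementary fallback you gesture at is also not a workable argument as described, and it is not Sturm's argument. Multiplying $\bar f$ by forms congruent to $1\pmod m$ (such as $E_{m-1}$, which in any case requires $m\geq5$) only increases the weight, and the Sturm bound grows linearly in the weight, so ``enlarging the weight'' buys nothing; there is no elementary $q$-expansion argument that becomes available at large weight. The actual elementary mechanism is a reduction in \emph{level}, not an increase in weight: one forms the norm $g=\prod_{\gamma}f|_k\gamma$ over coset representatives of $\Gamma_0(N)\backslash\mathrm{SL}_2(\mathbb{Z})$, obtaining a form of weight $k[\mathrm{SL}_2(\mathbb{Z}):\Gamma_0(N)]$ on $\mathrm{SL}_2(\mathbb{Z})$ with coefficients in a cyclotomic ring whose vanishing order modulo a prime above $m$ is at least that of $f$; one then applies a level-one valence/structure argument (the integral structure $E_4$, $E_6$, $\Delta$, with division by $\Delta$) to force $g\equiv0$, and finally one must argue that vanishing of some conjugate $f|_k\gamma$ modulo that prime forces $f\equiv0\pmod m$. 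Since the decisive valence-type bound modulo $m$ is left as a black box in both branches of your plan, what you have is a plausible outline rather than a proof.
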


~

\label{examples}

\noindent
According to Theorem \ref{Sturm's theorem}, we discover that
$$\sum_{n=0}^\infty b_4\left(\frac{mn-1}{8}\right)q^n\ |\ T(l)\equiv0\ (\mathrm{mod}\ m)$$

\noindent
is satisfied for the following combinations of $m$ and $l$:
\begin{enumerate}
    \item $m=5$ and $l=809,839,1249,1279,1319,1489,1811$;
    \item $m=7$ and $l=1889,1901$.
\end{enumerate}

\noindent
A straightforward computation shows that

~

\begin{proposition}
For the given values of $m$ and $l$, the congruence
\begin{equation}
    \abovedisplayskip=1em
    \notag
    b_4\bracket{ml(ln+j)+\frac{m^2l^2-1}{8}}\equiv0\ (\mathrm{mod}\ m)
    \belowdisplayskip=1em
\end{equation}

\noindent
satisfied for each $n$ and $1\leq j\leq l-1$.
\end{proposition}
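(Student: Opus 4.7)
The plan is to specialize the mechanism of Theorem \ref{main theorem} to the explicit pairs $(m,l)$ listed just above the proposition, where Sturm's theorem has already certified the hypothesis $F(m;z)\ |\ T(l) \equiv 0 \pmod m$. The remaining work is a change of variable that turns the progression $\{n : \gcd(n,l) = 1\}$ produced by Theorem \ref{main theorem} into the progressions appearing in the statement.

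First I would recall from Theorem \ref{cusp form1} that $F(m;z) := \sum_{n \geq 0} b_4((mn-1)/8)\, q^n$ lies in $M_{3m-3}(\Gamma_0(256))_m$ with trivial character, so that the coefficient of $q^n$ in $F(m;z)\ |\ T(l)$ is
\begin{equation}
\notag
b_4\!\left( \frac{mln-1}{8} \right) + l^{3m-4}\, b_4\!\left( \frac{mn/l-1}{8} \right),
\end{equation}
with the second summand interpreted as $0$ whenever $l \nmid n$. For each listed pair $(m,l)$ the vanishing $F(m;z)\ |\ T(l) \equiv 0 \pmod m$ is a finite computation against the Sturm bound $32(3m-3)$ for $M_{3m-3}(\Gamma_0(256))$, already carried out in the preceding paragraph. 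Restricting the resulting identity to indices $n$ with $\gcd(n,l) = 1$ kills the second term, yielding $b_4((mln-1)/8) \equiv 0 \pmod m$ for every such $n$.

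Second, I would make the substitution $n = 8l n_0 + 8j + ml$ with $n_0 \geq 0$ and $1 \leq j \leq l-1$. Since $l$ is an odd prime and $1 \leq j \leq l-1$ we have $n \equiv 8j \not\equiv 0 \pmod l$, so $\gcd(n,l) = 1$; and since $m, l$ are odd we have $m^2 l^2 \equiv 1 \pmod 8$, so
\begin{equation}
\notag
\frac{mln-1}{8} \;=\; ml(l n_0 + j) + \frac{m^2 l^2 - 1}{8}
\end{equation}
is a manifest integer identity. Substituting back into the congruence from the previous step gives precisely the statement.

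The only computationally non-trivial step will be the Sturm certification $F(m;z)\ |\ T(l) \equiv 0 \pmod m$ for the listed primes $l$: it requires the Fourier coefficients of $F(m;z)$ up to index roughly $l \cdot 32(3m-3)$, a heavy but purely routine calculation. Once that is in hand, the rest is the standard Hecke-operator formula together with an elementary change of variable.
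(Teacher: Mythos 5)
Your proposal is correct and matches the paper's route: the paper certifies $F(m;z)\ |\ T(l)\equiv 0\pmod m$ via Sturm's bound for the listed pairs $(m,l)$ and then treats the rest as "a straightforward computation," which is exactly the argument you spell out — restricting the Hecke formula to $n$ coprime to $l$ (as in Theorem \ref{main theorem}) and making the change of variable $n=8ln_0+8j+ml$, which indeed yields $\frac{mln-1}{8}=ml(ln_0+j)+\frac{m^2l^2-1}{8}$ with $\gcd(n,l)=1$. No gaps; you have simply made the paper's implicit elementary computation explicit.
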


~

\noindent
In fact, we have a full characterization of $b_4(n)$ modulo $2$. Since
\begin{equation}
    \abovedisplayskip=1em
    \notag
    \sum_{n=0}^\infty b_4\bracket{\frac{n-1}{8}}q^n=\sum_{n=0}^\infty b_4(n)q^{8n+1}=\frac{\eta(32z)}{\eta(8z)}\equiv_2\eta^{24}(z)=\sum_{n=1}^\infty\tau(n)q^n,
    \belowdisplayskip=1em
\end{equation}

\noindent
where $\tau(n)$ is the Ramanujan tau function and $\tau(n)$ is odd if and only if $n$ is an odd square (see for example \cite[Proof of Theorem 2.2.1]{berndt2006number}). Thus $b_4(n)$ is odd if and only if $8n+1$ is an odd square, or equivalently, $b_4(n)$ is odd if and only if $n$ is a triangular number. (Full characterization of $b_4(n)$ modulo $2$ is already proved by Cherubini and Mercuri \cite[Theorem 1.3]{cherubini2022parity}, but the argument here is simpler.) So we have

~

\begin{proposition}
For each odd prime $m$, the congruence
\begin{equation}
    \abovedisplayskip=1em
    \notag
    b_4(m^2n+j)\equiv0\ (\mathrm{mod}\ 2)
    \belowdisplayskip=1em
\end{equation}

\noindent
satisfied for each $n$, where $j$ satisfies $m\,||\,8j+1$.
\end{proposition}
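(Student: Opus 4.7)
My plan is to exploit the parity characterization of $b_4$ just derived in the paper: $b_4(N)$ is odd precisely when $8N+1$ is an odd square, equivalently when $N$ is a triangular number. With this in hand, the proposition reduces to a purely elementary claim: whenever the hypothesis $m\,\|\,8j+1$ holds, the quantity $8(m^2n+j)+1 = 8m^2 n + 8j + 1$ fails to be a perfect square for every $n \geq 0$.

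I would prove this non-representability by contradiction. Suppose $8m^2 n + 8j + 1 = k^2$ for some integer $k$. Reducing modulo $m$ and invoking $m \mid 8j+1$ gives $m \mid k^2$, hence $m \mid k$ since $m$ is prime. But then $m^2 \mid k^2$, and reducing the original equation modulo $m^2$ forces $m^2 \mid 8j+1$, contradicting the assumption that $m$ exactly divides $8j+1$. Therefore $8(m^2n+j)+1$ is not an odd square for any $n$, and the previous characterization yields $b_4(m^2 n + j) \equiv 0 \pmod 2$ as required.

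I do not anticipate any substantive obstacle; the entire argument is a two-line divisibility exercise once the mod-$2$ parity description of $b_4$ in terms of the form $8N+1$ (derived just above via $\eta(32z)/\eta(8z) \equiv \eta^{24}(z) \pmod 2$ and the Ramanujan tau function) is in place. The only point worth being careful about is that $m$ is an odd prime, which is used to pass from $m \mid k^2$ to $m \mid k$, and which ensures that the residue class $j \equiv -8^{-1} \pmod m$ splits into $m$ residues modulo $m^2$, exactly one of which violates $m\,\|\,8j+1$, so the set of admissible $j$ is nonempty for every such $m$.
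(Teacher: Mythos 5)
Your proof is correct and follows the paper's approach exactly: both rest on the parity characterization that $b_4(N)$ is odd if and only if $8N+1$ is a square (derived from $\eta(32z)/\eta(8z)\equiv\eta^{24}(z)\pmod 2$ and the parity of $\tau$), after which the paper leaves the elementary step implicit while you spell out the divisibility argument ($m\mid k$ forces $m^2\mid 8j+1$, contradicting $m\,||\,8j+1$), which is precisely the intended completion.
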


\noindent
Now we are going to prove Theorem \ref{congruences mod 3}.

\begin{proof}[Proof of Theorem \ref{congruences mod 3}]
    We first observe that $\modulo{l}{13,17,19,23}{24}$ is equivalent to $\left(\frac{-6}{l}\right) = -1$.

    By Theorem 3.1 of \cite{andrews2010arithmetic}, we have
    \begin{equation}
        \abovedisplayskip=1em
        \notag
        \begin{aligned}
            \sum_{n=0}^\infty b_4(3n)q^n&=\frac{(q^4;q^4)_\infty(q^6;q^6)_\infty^4}{(q;q)_\infty^3(q^{12};q^{12})^2_\infty}\\
            &\equiv_3\frac{(q^2;q^2)^{12}_\infty}{(q;q)_\infty^3(q^4;q^4)_\infty^5}\\
            &=\frac{(q^2;q^2)^{13}_\infty}{(q;q)_\infty^5(q^4;q^4)_\infty^5}\cdot\frac{(q;q)^2_\infty}{(q^2;q^2)_\infty}.
        \end{aligned}
        \belowdisplayskip=1em
    \end{equation}

    \noindent
    By the fourth identity of Theorem 1.1 (2) of \cite{oliver2013eta}, we have
    \begin{equation}
        \abovedisplayskip=1em
        \notag
        \frac{(q^2;q^2)^{13}_\infty}{(q;q)_\infty^5(q^4;q^4)_\infty^5}=\sum_{m=1}^\infty\bracket{\frac{-6}{m}}mq^{(m^2-1)/24}.
        \belowdisplayskip=1em
    \end{equation}

    \noindent
    By the first identity of Theorem 1.2 of \cite{oliver2013eta}, we have
    \begin{equation}
        \abovedisplayskip=1em
        \notag
        \frac{(q;q)^2_\infty}{(q^2;q^2)_\infty}=\sum_{k=-\infty}^\infty(-1)^kq^{k^2}.
        \belowdisplayskip=1em
    \end{equation}

    \noindent
    Thus
    \begin{equation}
        \abovedisplayskip=1em
        \label{b_4(3n)}
        \sum_{n=0}^\infty b_4(3n)q^n\equiv_3\sum_{m=1}^\infty\sum_{k=-\infty}^\infty(-1)^k\bracket{\frac{-6}{m}}mq^{k^2+(m^2-1)/24}.
        \belowdisplayskip=1em
    \end{equation}

    \noindent
    Let $n_0=ln+(l^2-1)/24$ with $(n,l)=1$. Now, we analyze the coefficient of $q^{n_0}$ on both sides of \eqref{b_4(3n)}. As for the left-hand side, the coefficient is
    \begin{equation}
        \abovedisplayskip=1em
        \notag
        b_4\left(3ln+\frac{l^2-1}{8}\right).
        \belowdisplayskip=1em
    \end{equation}

    \noindent
    As for the right-hand side, we solve $n_0=k^2+(m^2-1)/24$ for $n$, which is equivalent to $24ln+l^2=24k^2+m^2$. First, assuming that $l\mid k$, we would have $l\mid m$, and consequently, $l^2\mid 24ln$, which contradicts the condition $(n,l)=1$. Now, let us assume that $l\nmid k$, we have
    \begin{equation}
        \abovedisplayskip=1em
        \notag
        \modulo{-24k^2}{m^2}{l},
        \belowdisplayskip=1em
    \end{equation}

    \noindent
    i.e.
    \begin{equation}
        \abovedisplayskip=1em
        \notag
        \modulo{-6}{\bracket{\frac{m}{2k}}^2}{l},
        \belowdisplayskip=1em
    \end{equation}

    \noindent
    which contradicts $\bracket{\frac{-6}{l}}=-1$. Therefore, there is no such $n$, and the coefficient of $q^{n_0}$ on the right-hand side of \eqref{b_4(3n)} is $0$. Consequently, we obtain
    \begin{equation}
        \abovedisplayskip=1em
        \notag
        \modulo{b_4\left(3ln+\frac{l^2-1}{8}\right)}03
        \belowdisplayskip=1em
    \end{equation}

    \noindent
    for each $n$ coprime to $l$ and $\bracket{\frac{-6}{l}}=-1$.

    Again by Theorem 3.1 of \cite{andrews2010arithmetic}, we have
    \begin{equation}
        \abovedisplayskip=1em
        \notag
        \sum_{n=0}^\infty (-1)^nb_4(3n+1)q^n=\frac{\phi(q^3)\psi(q^3)}{\phi^2(q)},
        \belowdisplayskip=1em
    \end{equation}

    \noindent
    where
    \begin{equation}
        \abovedisplayskip=1em
        \notag
        \phi(q)=\sum_{n=-\infty}^\infty q^{n^2}\quad\text{and}\quad\psi(q)=\sum_{n=0}^\infty q^{n(n+1)/2}.
        \belowdisplayskip=1em
    \end{equation}

    \noindent
    Note that
    \begin{equation}
        \abovedisplayskip=1em
        \notag
        {\frac{\phi(q^3)\psi(q^3)}{\phi^2(q)}}\equiv{\phi(q)\psi(q^3)}\pmod3.
        \belowdisplayskip=1em
    \end{equation}

    \noindent
    Thus
    \begin{equation}
        \abovedisplayskip=1em
        \label{phi psi^3}
        \sum_{n=0}^\infty (-1)^nb_4(3n+1)q^n\equiv_3\phi(q)\psi(q^3)=\sum_{m=1}^\infty\sum_{k=-\infty}^\infty q^{m^2+3k(k+1)/2}.
        \belowdisplayskip=1em
    \end{equation}

    \noindent
    Let $n_0=ln+(3l^2-3)/8$ with $(n,l)=1$. Now, we analyze the coefficient of $q^{n_0}$ on both sides of \eqref{phi psi^3}. As for the left-hand side, the coefficient is
    \begin{equation}
        \abovedisplayskip=1em
        \notag
        (-1)^{n_0}b_4\left(3ln+\frac{9l^2-1}{8}\right).
        \belowdisplayskip=1em
    \end{equation}

    \noindent
    As for the right-hand side, we solve $n_0=3k(k+1)/2+m^2$ for $n$, which is equivalent to $8ln+3l^2=3(2k+1)^2+8m^2$. First, assuming that $l\mid 2k+1$, we would have $l\mid m$, and consequently, $l^2\mid 8ln$, which contradicts the condition $(n,l)=1$. Now, let us assume that $l\nmid 2k+1$, we have
    \begin{equation}
        \abovedisplayskip=1em
        \notag
        \modulo{-6(2k+1)^2}{(4m)^2}{l},
        \belowdisplayskip=1em
    \end{equation}

    \noindent
    i.e.
    \begin{equation}
        \abovedisplayskip=1em
        \notag
        \modulo{-6}{\bracket{\frac{4m}{2k+1}}^2}{l},
        \belowdisplayskip=1em
    \end{equation}

    \noindent
    which contradicts $\bracket{\frac{-6}{l}}=-1$. Therefore, there is no such $n$, and the coefficient of $q^{n_0}$ on the right-hand side of \eqref{phi psi^3} is $0$. Consequently, we obtain
    \begin{equation}
        \abovedisplayskip=1em
        \notag
        \modulo{b_4\left(3ln+\frac{9l^2-1}{8}\right)}03
        \belowdisplayskip=1em
    \end{equation}

    \noindent
    for each $n$ coprime to $l$ and $\bracket{\frac{-6}{l}}=-1$.
\end{proof}

\noindent
We also obtain congruences of $b_6(n)$ modulo $5$.

\begin{proposition}
For $l=1973,2711$, the congruence
\begin{equation}
    \abovedisplayskip=1em
    \notag
    b_6\bracket{25l(ln+j)+\frac{125l^2-5}{24}}\equiv0\ (\mathrm{mod}\ 5)
    \belowdisplayskip=1em
\end{equation}

\noindent
satisfied for each $n$ and $1\leq j\leq l-1$.
\end{proposition}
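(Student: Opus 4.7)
The plan is to mirror the strategy used for the preceding $b_4$ proposition, but applied to the modular form produced by Theorem \ref{cusp form2} in the $m=5$ case. Set
\[
F(z) = \sum_{n=0}^{\infty} b_6\!\left(\frac{25n-5}{24}\right)q^n \in S_{48}(\Gamma_0(3456),\chi_6)_5.
\]
The assertion to be extracted is that for $l=1973$ and $l=2711$, the action of the Hecke operator $T(l)$ on $S_{48}(\Gamma_0(3456),\chi_6)$ annihilates $F(z)$ modulo $5$. By Sturm's theorem (Theorem \ref{Sturm's theorem}), this reduces to a finite check: compute the Fourier coefficients of $F(z)\mid T(l)$ up to the Sturm bound $\frac{48\cdot 3456}{12}\bigl(1+\tfrac{1}{2}\bigr)\bigl(1+\tfrac{1}{3}\bigr) = 27648$ and verify they all vanish modulo $5$.

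Having $F(z)\mid T(l)\equiv 0 \pmod{5}$, the standard Hecke-operator identity gives
\[
\sum_{n=0}^{\infty}\left(b_6\!\left(\frac{25ln-5}{24}\right)+\left(\frac{6}{l}\right)l^{46}\, b_6\!\left(\frac{25n/l-5}{24}\right)\right)q^n \equiv 0 \pmod{5}.
\]
For $n$ coprime to $l$, the second term vanishes because $b_6$ is supported on integer arguments, so one obtains $b_6\!\left(\tfrac{25ln-5}{24}\right)\equiv 0 \pmod 5$ for every $n$ with $(n,l)=1$. This is exactly the $m=5$ case of Theorem \ref{main theorem2}, now with an explicit prime $l$.

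Finally, I would repackage the congruence in the stated form by the substitution $n \mapsto 24ln + 24j + 5l$, where $1\leq j \leq l-1$. A direct computation gives
\[
\frac{25l(24ln+24j+5l)-5}{24} = 25l(ln+j) + \frac{125l^2-5}{24},
\]
and since $l$ is an odd prime with $l>3$, the integer $24ln+24j+5l \equiv 24j \pmod l$ is coprime to $l$ for $1\leq j\leq l-1$. This yields the announced congruence for all $n$ and all $j$ in the stated range.

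The main obstacle is purely computational: verifying $F(z)\mid T(l)\equiv 0\pmod 5$ up to the Sturm bound $27648$ for the two primes $l = 1973, 2711$. This requires generating $b_6(N)$ for $N$ roughly of size $25\cdot l \cdot 27648 / 24$, i.e.\ well into the tens of billions — no conceptual difficulty, but the verification depends on efficient series arithmetic modulo $5$ rather than on any new theoretical input.
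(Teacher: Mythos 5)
Your proposal is correct and follows the same route as the paper: verify $F(z)\mid T(l)\equiv 0\pmod 5$ via Sturm's bound for $l=1973,2711$, invoke the Hecke expansion (the $m=5$ case of Theorem \ref{main theorem2}) to kill the coefficients with $(n,l)=1$, and then perform the elementary substitution to reach the stated arithmetic-progression form. The paper's proof is exactly this, stated more tersely, and your substitution $n\mapsto 24ln+24j+5l$ together with the coprimality check is the ``elementary computation'' it leaves implicit.
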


\begin{proof}
    For $l=1973,2711$, by Sturm's Theorem \ref{Sturm's theorem}, we have
    \begin{equation}
        \abovedisplayskip=1em
        \notag
        \sum_{n=0}^\infty b_6\bracket{\frac{25n-5}{24}}q^n\mid T(l)\equiv 0\ (\text{mod}\ 5).
        \belowdisplayskip=1em
    \end{equation}

    \noindent
    The desired result is now immediate after some elementary computation.
\end{proof}

~

\noindent
For $m\geq7$, we are unable to provide a specific example of $b_6(n)$, although such examples do exist. In cases where $m=7$ or $m=11$, our computations show that for primes $l\leq25000$,
$$\sum_{n=0}^\infty b_6\left(\frac{mn-5}{24}\right)q^n\ |\ T(l)\not\equiv0\ (\mathrm{mod}\ m).$$

~

\section{Open problems}

In this paper, we establish the existence of infinitely many Ramanujan-type congruences modulo $m$ for $b_6(n)$, where $m\geq5$. Additionally, congruences modulo $3$ have been explored (see \cite{ahmed2016new}). However, the question arises: what occurs when $m=2$?

\begin{problem}
For $b_6(n)$, either discover a Ramanujan-type congruence modulo $2$ or prove that no such congruence exists.
\end{problem}

While Ramanujan-type congruences modulo nearly all primes $m$ do exist, it is important to note that discovering them may require extensive computations. We encourage readers who are interested to explore and seek examples of congruences modulo different primes.

\section*{Acknowledgement}

We would like to acknowledge the inspiration that arose from reading the papers of Ono \cite{ono2000distribution} and Lovejoy \cite{lovejoy2001divisibility}, which sparked the ideas behind this work.

\begin{thebibliography}{15}

\bibitem{ahmed2016new} Ahmed Z, Baruah N D. New congruences for $\ell$-regular partitions for $\ell\in \{5, 6, 7, 49\}$[J]. The Ramanujan Journal, 2016, 40: 649-668.

\bibitem{andrews2010arithmetic} Andrews G E, Hirschhorn M D, Sellers J A. Arithmetic properties of partitions with even parts distinct[J]. The Ramanujan Journal, 2010, 23(1-3): 169-181.

\bibitem{berndt2006number} Berndt B C. Number theory in the spirit of Ramanujan[M]. American Mathematical Soc., 2006.

\bibitem{cherubini2022parity} Cherubini G, Mercuri P. Parity of 4-regular and 8-regular partition functions[J]. arXiv preprint arXiv:2212.11356, 2022.

\bibitem{diamond2005first} Diamond F, Shurman J M. A first course in modular forms[M]. New York: Springer, 2005.

\bibitem{gordon1993multiplicative} Gordon B, Hughes K. Multiplicative properties of eta-products II[J]. Contemporary Mathematics, 1993, 143: 415-415.

\bibitem{lovejoy2001divisibility} Lovejoy J. Divisibility and distribution of partitions into distinct parts[J]. Advances in Mathematics, 2001, 158(2): 253-263.

\bibitem{martin1996multiplicative} Martin Y. Multiplicative $\eta$-quotients[J]. Transactions of the American Mathematical Society, 1996, 348(12): 4825-4856.

\bibitem{oliver2013eta} Oliver R J L. Eta-quotients and theta functions[J]. Advances in Mathematics, 2013, 241: 1-17.

\bibitem{ono2000distribution} Ono K. Distribution of the partition function modulo m[J]. Annals of Mathematics, 2000: 293-307.

\bibitem{ono2004web} Ono K. The Web of Modularity: Arithmetic of the Coefficients of Modular Forms and $ q $-series: Arithmetic of the Coefficients of Modular Forms and Q-series[M]. American Mathematical Soc., 2004.

\bibitem{serre1974divisibilite} Serre J P. Divisibilité de certaines fonctions arithmétiques[J]. Séminaire Delange-Pisot-Poitou. Théorie des nombres, 1974, 16(1): 1-28.

\bibitem{sturm1987congruence} Sturm J. On the congruence of modular forms[M]//Number theory. Springer, Berlin, Heidelberg, 1987: 275-280.

\bibitem{treneer2006congruences} Treneer S. Congruences for the coefficients of weakly holomorphic modular forms[J]. Proceedings of the London Mathematical Society, 2006, 93(2): 304-324.

\bibitem{zheng2022distribution} Zheng Q Y. Distribution of 3-regular and 5-regular partitions[J]. arXiv preprint arXiv:2205.03191, 2022.

\end{thebibliography}
\end{document}